\definecolor{light}{gray}{0.65}
\numberwithin{equation}{section}
\def\rank{\operatorname{rank}}
\def\Iter{\operatorname{Iter}}
\def\lcm{\operatorname{lcm}}
\def\gp{\operatorname{gp}}
\DeclareMathOperator{\vol}{vol}
\DeclareMathOperator{\HB}{HB}
\def\ZZ{{\mathbb Z}}
\def\NN{{\mathbb N}}
\def\RR{{\mathbb R}}
\def\CC{{\mathbb C}}
\newtheorem{corollary}[algocf]{Corollary}
\newtheorem{theorem}[algocf]{Theorem}
\newtheorem{proposition}[algocf]{Proposition}
\theoremstyle{definition}
\newtheorem{definition}[algocf]{Definition}
\newtheorem{remark}[algocf]{Remark}
\newtheorem{example}[algocf]{Example}
\newtheorem{question}[algocf]{Question}
\numberwithin{algocf}{section}
\def\ttt#1{\texttt{#1}}
\begin{document}

\title{On the consistency of score sheets of a round-robin football tournament}

\author{Bogdan Ichim}

\address{Bogdan Ichim \\ University of Bucharest \\ Faculty of Mathematics and Computer Science \\ Str. Academiei 14 \\ 010014 Bucharest \\ Romania \\ and \newline
 Simion Stoilow Institute of Mathematics of the Romanian Academy \\ Research Unit 5 \\ C.P. 1-764 \\ 010702 Bucharest \\ Romania}
\email{bogdan.ichim@fmi.unibuc.ro \\ bogdan.ichim@imar.ro}

\author{Julio Jos\'e Moyano-Fern\'andez}

\address{Universitat Jaume I, Campus de Riu Sec, Departamento de Matem\'aticas \& Institut Universitari de Matem\`atiques i Aplicacions de Castell\'o, 12071
Caste\-ll\'on de la Plana, Spain} \email{moyano@uji.es}

\subjclass[2010]{Primary: 68R05; Secondary: 05A15, 15A39}
\keywords{Score sheets; affine monoids; Hilbert basis; multiplicity; Hilbert series.}
\thanks{
The second author was partially funded by MCIN/AEI/10.13039/501100011033 and by ``ERDF - A way of making Europe'', grant PGC2018-096446-B-C22, as well as by Universitat Jaume I, grant UJI-B2021-02.}

\begin{abstract} In this paper we introduce the submonoids $\mathscr{R}_n$, resp.~$\mathscr{C}_n$, of the monoid  $\mathscr{M}_n$ of ordered score sheets of a robin-round tournament played by $n$ teams for which the order is preserved after the leader team is disqualified, resp. all principal submatrices preserve the given ordering. We study (using both theoretical and  computational methods)
the most important invariants of these monoids, namely the Hilbert basis, the multiplicity, the Hilbert series and the Hilbert function. In particular we give a general description of the Hilbert basis of $\mathscr{R}_n$ and we show that $\mathscr{C}_n$ is Gorenstein for $n>2$.
%
\end{abstract}
\maketitle

\section{Introduction}

In the summer of 1992, Denmark national football team was informed about its sudden participation in  Euro'92 UEFA championship. Their surprising inclusion came after Yugoslavia, in a state of civil war, was not allowed to participate in the above tournament, according to the United Nations Security Council Resolution 757 (Implementing Trade Embargo on Yugoslavia). The Security Council, in \cite[paragraph 8(b)]{UNO}, decided that all States shall ``take the necessary steps to prevent the participation in sporting events on their territory of persons or groups representing the Federal Republic of Yugoslavia (Serbia and Montenegro)''. The Danes had missed out on qualification having finished as runners-up to Yugoslavia; they found suddenly themselves included in the eight-team tournament, for a detailed account see \cite{SCh}. It was even more surprising that they became champions of Euro'92.

This real-life example shows how external factors may affect a well-organized sport event. In the case under consideration, before Yugoslavia was kicked out, the Group 4 of the preliminary round consisted of five teams, which classified in the following order: Yugoslavia, Denmark, Northern Ireland, Austria and Faroe Islands. When Yugoslavia was not accepted in the next round, the Danes were called to replace them, since they were ordered immediately after Yugoslavia. Was this a fair decision?
In this particular situation, yes, it was, but this does not need to be always the case. Example \ref{ex:order_changed} shows that it is really possible that the disqualification of the top team will change the order among the remaining teams. If this is not the case, we say that a score sheet is \emph{runner-up consistent}. Moreover, we say that a score sheet is simply \emph{consistent}, if the order of the remaining teams is preserved, no matter which teams are eliminated from the competition.

Both runner-up consistent and consistent score sheets  may be endowed with the structure of an affine monoid and therefore they become naturally objects for an algebraic, combinatorial and statistical study.
In fact, they are submonoids of the affine monoid of \emph{ordered} score sheets, which was studied by the authors in \cite{IM}.
In this paper we continue this line of investigation,  by presenting a systematic study of both runner-up consistent and consistent score sheets of a round-robin football tournament played between $n$ teams.

We focus our attention on the most important invariants of these monoids.
More precisely, we study the Hilbert basis, the multiplicity, the Hilbert series and the Hilbert function for each monoid.
From a practical point of view, the most important invariants are the multiplicity and the Hilbert function, since they may be used for computing the probability of the events that runner-up consistent or consistent score sheets might appear as the result of a round-robin football tournament. However, it turns out that they are also the most challenging to compute.

Notice that combinatorics behind football and other sports have already caught some attention; for instance D. Zeilberger \cite{EZ} deduced a formula for the counting of the number of ways for the four teams of the group stage of the FIFA World Cup to each have $r$ goals \emph{for} and $r$ goals \emph{against} (this was the initial motivation for the authors study in \cite{IM}). Moreover, Zeilberger has recently given an interesting lecture about the number of possible histories in a football game \cite{Z2}. On the other hand, Kondratev, Ianovsky and Nesterov \cite{KIN} analyzed how scoring rules may be robust to adding or eliminating particular candidates.



This paper is structured as follows. In Section \ref{sec:pre} we briefly review some standard facts on rational cones and affine monoids. For details we refer the reader to Bruns and Gubeladze \cite{BG} or Bruns and Herzog \cite{BH}. The reader is supposed to be familiar with the basics of these topics. Section \ref{sec:scoresheets} provides a short exposition about ordered score sheets in the context of algebraic combinatorics; in particular, we introduce the submonoid $\mathscr{R}_n$ of the monoid of ordered score sheets of a round-robin tournament played by $n$ teams for which the given order is preserved after the leader team is disqualified, resp. its submonoid $\mathscr{C}_n$ for which all principal submatrices preserve the given ordering; we call them \emph{runner-up consistent}, resp. \emph{consistent} score sheets.

Section \ref{sec:main} contains the main results: we study the Hilbert basis of $\mathscr{R}_n$ for which a full description is given in Theorem \ref{theo:HB}. Moreover, Theorem \ref{theo:Gorenstein} shows that the monoid $\mathscr{C}_n$ is Gorenstein for every $n\geq 3$.

In Section \ref{sec:qua} we focus on particular questions about Hilbert series, Hilbert quasipolynomials and multiplicities. We present in detail the numerical data associated to the monoids $\mathscr{R}_3$ and $\mathscr{C}_3$, as well as partial data for the monoids $\mathscr{R}_n$ and $\mathscr{C}_n$ for some $n\geq 4$. The data is further used for applications. We finish this paper with a report on the computational experiments done using the software Normaliz \cite{Nmz} in Section \ref{sec:comp}.
\smallskip

The first author wishes to express his gratitude to the Department of Mathematics and the Institute of Mathematics and Applications of Castell\'o--IMAC in the Universitat Jaume I (Spain) for kind hospitality and support during his three-weeks-long research stay in Autumn 2019.


\section{Preliminaries}\label{sec:pre}

This paper deals with the theory of rational cones and affine monoids, as well as the theory of generating functions (Hilbert function, Hilbert series). Good references for these are the book of Bruns and Gubeladze \cite{BG} for the first issue, and the book of Bruns and Herzog for the second \cite{BH}. Based on \cite{BG}, Section 2 in our paper \cite{IM} gives also a short account on the essentials. We do not repeat all this introductory material here, but just a minimum in order to cover the essential elements in the theory and to fix notation.

\subsection{Affine monoids and cones}
A subset $C\subset \RR^d$ is called a \emph{rational cone} if it is the intersection of
finitely many closed linear rational halfspaces. The \emph{dimension} of a cone is the dimension of the smallest vector subspace of $\RR^d$ which contains it. If $\dim
C=d$, then the halfspaces in an irredundant representation of $C$  are uniquely determined.
A cone is \emph{pointed} if
$x,-x\in C$ implies $x=0$. In the following all cones $C$ are rational and pointed and we shall omit these attributes.

A hyperplane $H$ is called a \emph{supporting hyperplane} of a cone $C$ if $C\cap H\neq \emptyset$ and $C$ is contained in one of the closed halfspaces determined by $H$.
If $H$ is a supporting hyperplane of $C$, then $F=C\cap H$ is called a \emph{proper face} of $C$. It is convenient to consider also the empty set and $C$ as faces, the \emph{improper faces}.
The faces of a cone are themselves cones. A face $F$ with $\dim(F)=\dim(C)-1$ is called a \emph{facet}.
The faces of dimension $1$ of a pointed cone are called \emph{extreme rays} and the vectors with coprime integral components spanning them are called \emph{extreme integral generators}.
\medskip

An \emph{affine monoid} $M$ is a finitely generated (and isomorphic
to a) submonoid of a lattice $\ZZ^n$. It admits a unique minimal system of generators given by its irreducible elements, which is called the \emph{Hilbert basis} of $M$ and denoted by $\HB(M)$.
By $\gp(M)$ we denote the
subgroup generated by $M$, and by $r=\rank M$ its rank. The group $\gp(M)$ is isomorphic to $\ZZ^r$, and we may identify them. Let $C=\RR_+M\subset\RR^r$ be the cone generated by $M$.
Then we may write \[C=H^{+}_{\sigma_{1}}\cap \ldots
\cap H^{+}_{\sigma_{s}}\]
as an irredundant intersection of halfspaces defined by linear forms $\sigma_i$ on $\RR^r$, which are called \emph{support
forms} of $M$, after they have been further specialized such that
$\sigma_i(\ZZ^r)=\ZZ$. The last condition
amounts to the requirement that the $\sigma_i$ have coprime
integral coefficients. (Such linear forms are called
\emph{primitive}.) Note that after this standardization, we can assume that $\sigma_i$ is the $\ZZ^r$-height above $H_{\sigma_{i}}$. We also set
$\mathrm{int}(M)=M\cap\mathrm{int}(\RR_+M)$.
Further, we define the
\emph{standard map}
$$
\sigma:M\to\ZZ_+^s,\qquad
\sigma(x)=\bigl(\sigma_1(x),\dots,\sigma_s(x)\bigr).
$$

In this paper the Gorenstein condition for monoids will play an important role. For normal affine monoids we have the following characterization (cf. \cite[Theorem 6.33]{BG}):

\begin{theorem}\label{theo:Bruns_Gorenstein} Let $M$ be a normal affine monoid. The following are equivalent:
\begin{enumerate}
\item $M$ is Gorenstein;
\item there exists $x\in \mathrm{int}(M)$ such that $\mathrm{int}(M)=x+M$;
\item there exists $x\in M$ such that $\sigma_i(x)=1$ for all support forms $\sigma_i$ of $\mathbb{R}_+M$.
\end{enumerate}
\end{theorem}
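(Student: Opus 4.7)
The plan is to establish the chain $(2) \Leftrightarrow (3)$ by a direct combinatorial argument on the support forms, and then derive $(1) \Leftrightarrow (2)$ via the Danilov--Stanley description of the canonical module of $K[M]$. A key preliminary observation, coming from normality, is that
\[
M = \{y \in \gp(M) : \sigma_i(y) \geq 0 \text{ for all } i\}, \qquad \mathrm{int}(M) = \{y \in \gp(M) : \sigma_i(y) \geq 1 \text{ for all } i\},
\]
where the second identity uses that the $\sigma_i$ are primitive and that the facets of $C$ are precisely the zero-loci of the $\sigma_i$.

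For $(3) \Rightarrow (2)$, I would take $x \in M$ with $\sigma_i(x) = 1$ for all $i$: then $x \in \mathrm{int}(M)$, and for any $y \in \mathrm{int}(M)$ one has $\sigma_i(y-x) = \sigma_i(y) - 1 \geq 0$, so $y - x \in M$ by the normality characterization above; conversely $x + M \subseteq \mathrm{int}(M)$ is immediate from $\sigma_i(x+z) \geq 1$. For $(2) \Rightarrow (3)$, if $\mathrm{int}(M) = x + M$, then every interior point $y$ satisfies $\sigma_i(y) \geq \sigma_i(x)$. Since each $\sigma_i$ maps $\gp(M)$ surjectively onto $\ZZ$, one can produce an interior point $y$ with $\sigma_i(y) = 1$ (take any lattice element with $\sigma_i$-value $1$ and $\sigma_j$-values large for $j \neq i$), forcing $\sigma_i(x) = 1$ for each $i$.

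For $(1) \Leftrightarrow (2)$, I would invoke the Danilov--Stanley theorem: for a normal affine monoid the canonical module of $K[M]$ decomposes as
\[
\omega_{K[M]} \;=\; \bigoplus_{y \in \mathrm{int}(M)} K \cdot X^y,
\]
as a $\ZZ^r$-graded $K[M]$-module. Since $K[M]$ is Cohen--Macaulay by Hochster's theorem, $M$ (equivalently $K[M]$) is Gorenstein precisely when $\omega_{K[M]}$ is free of rank one, i.e.\ isomorphic to $K[M]$ up to a degree shift. Such an isomorphism exists if and only if $\omega_{K[M]}$ is a principal module, generated as $K[M]$-module by some monomial $X^x$ with $x \in \mathrm{int}(M)$; in monoid terms, this is exactly the condition $\mathrm{int}(M) = x + M$.

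The technically subtle point is the surjectivity argument inside $(2) \Rightarrow (3)$: one needs that each primitive support form $\sigma_i$ actually attains the value $1$ on some interior lattice point, which rests on the fact that $\sigma_i(\gp(M)) = \ZZ$ (a consequence of standardizing the $\sigma_i$ to primitive forms) together with the ability to shift an element of $\gp(M)$ into the interior by adding a suitable positive combination of other lattice points. The $(1) \Leftrightarrow (2)$ step is conceptually deeper but routine once the Danilov--Stanley description is granted, so the main obstacle in a self-contained treatment would be establishing that formula for $\omega_{K[M]}$; citing Bruns--Gubeladze circumvents this.
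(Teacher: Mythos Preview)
The paper does not actually supply a proof of this statement: it is stated with the citation ``(cf.\ \cite[Theorem 6.33]{BG})'' and used as a black box in the proof of Theorem~\ref{theo:Gorenstein}. So there is no in-paper argument to compare against. Your sketch is the standard proof one finds in Bruns--Gubeladze: the equivalence $(2)\Leftrightarrow(3)$ via the description of $M$ and $\mathrm{int}(M)$ through the support forms, and $(1)\Leftrightarrow(2)$ via the Danilov--Stanley identification of the canonical module with the ideal spanned by the interior monomials. That is exactly the route the cited reference takes, and your outline is correct.

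One small point worth tightening in $(2)\Rightarrow(3)$: to produce an interior lattice point $y$ with $\sigma_i(y)=1$, the cleanest construction is to pick a lattice point $u$ in the relative interior of the facet $F_i=C\cap H_{\sigma_i}$ (so $\sigma_i(u)=0$ and $\sigma_j(u)\ge 1$ for $j\neq i$), take any $z\in\gp(M)$ with $\sigma_i(z)=1$, and set $y=z+Nu$ for $N$ large. Your phrasing ``take any lattice element with $\sigma_i$-value $1$ and $\sigma_j$-values large for $j\neq i$'' is correct in spirit but would benefit from this explicit mechanism, since one must keep $\sigma_i(y)$ fixed while pushing the other values up.
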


\subsection{Hilbert series and functions of affine monoids} Let $C\subset \RR^d$ be a full dimensional cone, i.e. $\dim C=d$, and let $M=C\cap\ZZ^d$ be the corresponding affine monoid.
A \emph{$\ZZ$-grading} of  $\ZZ^d$ is a linear map $\deg: \ZZ^d\to\ZZ$. If all sets $\{x\in M: \deg x=u\}$ are finite, then the \emph{Hilbert series} of $M$
with respect to the grading $\deg$ is defined to be the formal Laurent series
$$
H_M(t)=\sum_{i\in \ZZ} \#\{x\in M: \deg x=i\}t^{i}=\sum_{x\in M}t^{\deg x}.
$$
We shall assume in the following that $\deg x >0$ for all nonzero $x\in M$ (such a grading does exist given the fact that $C$ is pointed)
and that there exists an $x\in\ZZ^d$ such that $\deg x=1$.
Then the Hilbert series $H_M(t)$ is the Laurent expansion of a rational function at the origin:

\begin{theorem}[Hilbert, Serre, Ehrhart, Stanley]\label{theo:HS} Let $M=C\cap\ZZ^d$ be as above.
	The Hilbert series of $M$ may be written in the form
	$$
	H_M(t)=\frac{R(t)}{(1-t^e)^d}=\frac{h_0+\cdots+h_ut^u}{(1-t^e)^d},\qquad R(t)\in\ZZ[t], 
	$$
	where $e$ is the $\lcm$ of the degrees of the extreme integral generators of $C$ and $h_u\neq 0$.
\end{theorem}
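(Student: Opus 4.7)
The plan is to reduce the statement to the well-known formula for simplicial cones via a triangulation argument, which is the standard route to the Hilbert--Serre--Ehrhart--Stanley theorem.

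First I would fix a triangulation $\Delta$ of the cone $C$ whose rays are spanned by the extreme integral generators $v_1,\dots,v_m$ of $C$. Such a triangulation exists because any fan refinement obtained by successively inserting hyperplanes spanned by $d-1$ of the extreme rays eventually produces simplicial subcones, and these subcones have their rays among the rays of $C$. This is the existence result for triangulations into simplicial cones supported on the given rays.

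Next I would use a half-open (Stanley) decomposition to express $C$, and hence $M=C\cap\ZZ^d$, as a \emph{disjoint} union of half-open simplicial cones $\sigma_1^\circ,\dots,\sigma_N^\circ$, each with $d$ extreme integral generators drawn from $\{v_1,\dots,v_m\}$. By disjointness, $H_M(t)=\sum_{j=1}^{N} H_{\sigma_j^\circ\cap\ZZ^d}(t)$. For a single half-open simplicial cone $\sigma^\circ$ with generators $w_1,\dots,w_d$, every lattice point decomposes uniquely as $p+n_1w_1+\cdots+n_dw_d$ with $n_i\in\ZZ_{\geq 0}$ and $p$ in a fixed finite set $P\subset\ZZ^d$ (the lattice points of the corresponding half-open fundamental parallelepiped). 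Summing the geometric series in the $n_i$ yields
\[
H_{\sigma^\circ\cap\ZZ^d}(t)=\frac{\sum_{p\in P} t^{\deg p}}{\prod_{i=1}^{d}\bigl(1-t^{\deg w_i}\bigr)}.
\]
Because the grading is positive and injective on nonzero elements in the positive direction, each $\deg w_i\geq 1$, so every factor in the denominator is a nonzero formal series with unit constant term, and the numerator is a genuine polynomial in $t$.

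Finally, I would combine the pieces over a common denominator. Since $e=\lcm\{\deg v_i : 1\leq i\leq m\}$ and each $\deg w_i$ divides $e$, the polynomial $1-t^{\deg w_i}$ divides $1-t^e$ in $\ZZ[t]$. Multiplying numerator and denominator of each summand by the appropriate cofactor yields a common denominator $(1-t^e)^d$ and a numerator $R(t)\in\ZZ[t]$; after cancelling any trailing factor of $t$ and reading off the top-degree coefficient, we obtain the stated form with $h_u\neq 0$. The main obstacle is arranging the half-open decomposition cleanly so that no lattice point is double-counted and so that each simplicial piece contributes a denominator whose factors divide $(1-t^e)$; once that bookkeeping is in place, the rationality and the explicit shape of the denominator follow directly. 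Finiteness of each graded component, which is needed to make $H_M(t)$ well-defined in the first place, follows from pointedness of $C$ together with positivity of the grading.
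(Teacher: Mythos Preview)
The paper does not give its own proof of this theorem; it is stated in Section~\ref{sec:pre} as a classical preliminary result, with the reader referred to the books of Bruns--Gubeladze \cite{BG} and Bruns--Herzog \cite{BH}. Your argument is essentially the standard proof one finds in those references: triangulate $C$ using only its extreme rays, pass to a disjoint half-open (Stanley) decomposition into $d$-dimensional simplicial pieces, apply the fundamental-parallelepiped formula to each piece, and then clear denominators using the divisibility $\deg w_i\mid e$. So your approach is correct and matches the route the cited sources take.

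One minor remark: the phrase ``after cancelling any trailing factor of $t$'' is unnecessary and slightly off. Nothing has to be cancelled to secure $h_u\neq 0$. Once you have written $H_M(t)=R(t)/(1-t^e)^d$ with $R\in\ZZ[t]$, you simply set $u=\deg R$; since $0\in M$ contributes constant term $1$ to $H_M(t)$, the polynomial $R$ is nonzero and hence $h_u\neq 0$ automatically. (Also, the representation is not claimed to be reduced; the paper itself notes just after the statement that it is not unique.)
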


Note that the above representation is not unique. This problem is discussed in Bruns, Ichim and S\"oger \cite[Section 4]{BIS}.
\medskip

An  equivalent statement can be given using the \emph{Hilbert function}
$$
H(M,i)=\#\{x\in M: \deg x=i\},
$$
as considered e.g. by Bruns and the first author in \cite{bipams}, namely

\begin{theorem}[Hilbert, Serre, Ehrhart, Stanley]\label{theo:HP}
There exists a quasipolynomial $Q$ with rational coefficients, degree $d-1$ and period $p$ dividing $e$ such that $H(M,i)=Q(i)$ for all $i\ge0$.
\end{theorem}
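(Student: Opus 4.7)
The plan is to derive Theorem \ref{theo:HP} directly from Theorem \ref{theo:HS} by extracting the coefficient of $t^i$ from the closed form $H_M(t)=R(t)/(1-t^e)^d$.

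Expanding the denominator using
$$
(1-t^e)^{-d} = \sum_{k\geq 0}\binom{k+d-1}{d-1}t^{ek},
$$
and writing $R(t)=\sum_{j=0}^u h_j t^j$, one obtains
$$
H(M,i)=\sum_{\substack{0\leq j\leq\min(u,i)\\ e\mid (i-j)}} h_j \binom{(i-j)/e+d-1}{d-1}.
$$
For each residue class $a\in\{0,\ldots,e-1\}$, I would then define the polynomial
$$
Q_a(i):=\sum_{\substack{0\leq j\leq u\\ j\equiv a\pmod{e}}} h_j \binom{(i-j)/e+d-1}{d-1},
$$
where the binomial coefficient is read as the degree-$(d-1)$ polynomial in $i$ with zeros at $i=j-e,\, j-2e,\ldots,j-(d-1)e$. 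Setting $Q(i):=Q_a(i)$ for $i\equiv a\pmod e$ gives a candidate quasipolynomial with period dividing $e$ and degree at most $d-1$.

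Two loose ends remain. First, $Q(i)=H(M,i)$ must hold for \emph{every} $i\geq 0$, not only asymptotically. The difference between $Q_a(i)$ and the convolution sum above consists of terms indexed by $j>i$, $j\equiv i\pmod{e}$, $j\leq u$; each such term vanishes provided $0<(j-i)/e\leq d-1$, which follows from the standard degree bound $u<de$ on the $h$-polynomial (itself a consequence of $R(t)/(1-t^e)^d$ being a power series, so that its partial-fraction decomposition over $\CC$ has no polynomial part). Second, the degree of $Q$ must be exactly $d-1$: extracting the coefficient of $i^{d-1}$ in each $Q_a$ yields $R(1)/\bigl((d-1)!\,e^{d-1}\bigr)$, the same in every residue class, and this is nonzero because $\dim C=d$ forces $H(M,i)$ to grow like $i^{d-1}$.

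The principal obstacle is the verification that $Q(i)=H(M,i)$ holds for every $i\geq 0$ and not just for $i\gg 0$; resolving it relies on the degree bound $u<de$, which itself requires a brief separate argument about the shape of the partial-fraction expansion of the Hilbert series.
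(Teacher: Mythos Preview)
The paper does not prove this theorem: both Theorem~\ref{theo:HS} and Theorem~\ref{theo:HP} are quoted in the preliminaries as classical results (attributed to Hilbert--Serre--Ehrhart--Stanley) without argument, so there is nothing in the paper to compare your derivation against. Your overall strategy---expand $R(t)/(1-t^e)^d$ and collect a polynomial in each residue class modulo $e$---is standard and sound, but the step you yourself single out as the principal obstacle is not correctly resolved.

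Your justification of the bound $u<de$ is wrong: regularity of $R(t)/(1-t^e)^d$ at $t=0$ does \emph{not} force the polynomial part of its partial-fraction decomposition to vanish. For example $t+1/(1-t)=(1+t-t^2)/(1-t)$ is a power series at the origin, yet its numerator has degree $2\geq de=1$. What is needed is control at $t=\infty$, and that requires an extra ingredient---most cleanly Stanley reciprocity $(-1)^d H_M(t^{-1})=\sum_{x\in\mathrm{int}(M)}t^{\deg x}$, whose right-hand side vanishes at $t=0$, forcing $H_M(t)\to 0$ at infinity and hence $\deg R<de$; alternatively the bound falls out of the triangulation proof of Theorem~\ref{theo:HS}. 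A secondary gap: the coefficient of $i^{d-1}$ in $Q_a$ is $\bigl(\sum_{j\equiv a}h_j\bigr)\big/\bigl((d-1)!\,e^{d-1}\bigr)$, not $R(1)\big/\bigl((d-1)!\,e^{d-1}\bigr)$. That these residue-class partial sums of the $h_j$ all coincide is itself a nontrivial fact---equivalent to $H_M$ having pole order strictly less than $d$ at every nontrivial $e$-th root of unity---and it is guaranteed by the standing hypothesis that $\deg:\ZZ^d\to\ZZ$ is surjective (so that the slice $\mathscr{P}$ is $(d-1)$-dimensional and its positive volume governs the asymptotics in every residue class), but it is not visible from your formula for $Q_a$ alone.
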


The cone $C$ together with the grading $\deg$ define the rational
polytope
\[
\mathscr{P}=\mathscr{P}_{C}=C\cap \{x\in \RR^d:\deg x=1\}.
\]

It is not hard to show that the $p$ polynomial components of $Q$ have the same degree $d-1$ and the same leading coefficient
$a_{d-1}=\frac{\vol(\mathscr{P})}{(d-1)!},$
where $\vol(\mathscr{P})$ is the lattice normalized volume of $\mathscr{P}$ (a lattice simplex of smallest possible volume has volume $1$).
The parameter $e(M)=\vol(\mathscr{P})=a_{d-1}(d-1)!$ is called the \emph{multiplicity} of $M$.

\section{Consistency of ordered score sheets}\label{sec:scoresheets}

In this section first we briefly recall the terminology on score sheets of round-robin football tournaments that was introduced in \cite{IM}.

A \emph{score sheet} $S$ of a round-robin football tournament played between $n$ teams $T_1,\dots,T_n$ is a table of the form
$$
\begin{array}{c|c c c c}
&T_1&T_2& \cdots &T_n\\
\hline
T_1&\ast&g_{12}&\ldots&g_{1n}\\
T_2&g_{21}&\ast&\ldots&g_{2n}\\
\vdots&\vdots&&\ddots&\vdots\\
T_n&g_{n1}&g_{n2}&\ldots&\ast\\
\end{array}
$$
where $g_{ij} \in \mathbb{Z}_{+}$ and $g_{ij}$ represents the number of goals that the team $T_i$ scored the team $T_j$.
We denote by $\mathscr{S}_n$ the set of all score sheets of a round-robin football tournament played between $n$ teams.
Observe that an entry $g_{ij}$ may be any nonnegative integer.
\medskip

Further, we denote by $g_i$ the total number of goals scored by $T_i$, i.e.
$$
g_i=\sum_{\substack{j=1\\i\neq j}}^{n} g_{ij}.
$$
We also set $g_S=(g_1, \ldots, g_n)$, and the $\ZZ$-grading $\deg S=|\!|S|\!|=g_1+\cdots + g_n$. Then
$$|\!|S_1+S_2|\!|=|\!|S_1|\!|+|\!|S_2|\!|.$$
\medskip

We say that a score sheet $S\in \mathscr{S}_n$ of a round-robin football tournament played between $n$ teams is an \emph{ordered score sheet} if it satisfies the following inequalities:
$$
\sum_{\substack{j=1\\i\neq j}}^{n} g_{i,j} \geq \sum_{\substack{j=1\\i+1\neq j}}^{n} g_{i+1,j} \ \ \ \forall i=1,\dots,n-1,
$$
or, equivalently, the inequalities $g_1\ge g_2 \ge \cdots\ge g_n$. We remark that this is in fact the form in which most score sheets are presented.
It is easy to see that the set of the ordered score sheets of a round-robin football tournament played between $n$ teams is a submonoid of $\mathscr{S}_n$,
which we denote in the following by  $\mathscr{M}_n$.
\medskip

Next, we introduce the new terminology that will be used in the following. An ordered score sheet is called \emph{consistent} if all its principal submatrices are ordered score sheets.
In other words, the removal of all possible sets of rows and columns such that the indices of the deleted rows are the same as the indices of the deleted columns preserves the original order of the participants.
An ordered score sheet is called \emph{runner-up consistent}  if the score sheet resulting from the deletion of the first row and the first column preserves the ordering of the remaining tournament participants. These definitions make sense if $n\ge3$, which will be a general assumption throughout this paper. The set of all consistent, resp. runner-up consistent, score sheets of $n$ teams is a submonoid of  $\mathscr{M}_n$ denoted by $\mathscr{C}_n$, resp. $\mathscr{R}_n$. We have the inclusions
$$
\mathscr{C}_n \subseteq  \mathscr{R}_n \subseteq  \mathscr{M}_n \subseteq  \mathscr{S}_n.
$$

\begin{example}\label{ex:order_changed}
Consider the score sheet $S$ given by
\small
$$
\begin{array}{c|c c c c c || c}
&\mathrm{T_1}&\mathrm{T_2}& \mathrm{T_3} &\mathrm{T_4}&\mathrm{T_5}&\sum\\
\hline
\mathrm{T_1}&\ast&0&2&2 &2&6\\
\mathrm{T_2}&2&\ast&1&1&1&5\\
\mathrm{T_3}&0&1&\ast&2&1&4\\
\mathrm{T_4}&0&1&0&\ast&2&3\\
\mathrm{T_5}&0&1&1&0&\ast&2
\end{array}
$$
\normalsize
to which we have amended an extra column in order to indicate the total number of scored goals rowwise. By looking at the submatrix
\small
$$
\begin{array}{c| c c c c || c}
&\mathrm{T_2}& \mathrm{T_3} &\mathrm{T_4}&\mathrm{T_5}&\sum\\
\hline
\mathrm{T_2}&\ast&1&1&1&3\\
\mathrm{T_3}&1&\ast&2&1&\mathbf{4}\\
\mathrm{T_4}&1&0&\ast&2&3\\
\mathrm{T_5}&1&1&0&\ast&2
\end{array}
$$
\normalsize
(which was obtained by the deletion of the first row and the first column) it is clear that $S \notin \mathscr{R}_n$, therefore $S\notin \mathscr{C}_n$ neither.

\end{example}

\section{Main results}\label{sec:main}

\subsection{The Hilbert basis of the monoid $\mathscr{R}_n$}

In the first part of this section we present a complete description of the Hilbert basis of the monoid $\mathscr{R}_n$ of ordered score sheets of a round-robin football tournament played between $n$ teams which preserves the ordering of the remaining tournament participants after the leader team has been withdrawn from the tournament (as in the case of Yugoslavia in 1992).

\begin{theorem}\label{theo:HB}
The Hilbert basis of the monoid $\mathscr{R}_n$ is given by the the union of the sets $A_n$ and $B_n$ of ordered score sheets with the following properties:
\begin{enumerate}
\item[(A)] There exists $r\in \ZZ_+$, with $1\le r \le n$ such that
\begin{enumerate}
\item[(1)] The teams $T_1, \ldots , T_r$ have scored exactly one goal against any other team different from $T_1$;
\item[(2)] The teams $T_{r+1}, \ldots, T_n$ have scored no goal against other team.
\end{enumerate}
The set of these score sheets is denoted by $A_n$.
\item[(B)] There exist $r,q \in \ZZ_+$, with $1\le r < q \le n$ such that
\begin{enumerate}
\item[(1)] The teams $T_1, \ldots , T_r$ have scored exactly one goal against any other team different from $T_1$;
\item[(2)] The teams $T_{r+1}, \ldots, T_q$ have scored precisely one goal against $T_1$;
\item[(3)] The teams $T_{q+1}, \ldots , T_n$ have scored no goal against any other team.
\end{enumerate}
The set of these score sheets is denoted by $B_n$.
\end{enumerate}
The Hilbert basis of $\mathscr{R}_n$ equals the set of extreme integral generators of the cone $C_n=\RR_+\mathscr{R}_n$
and the polytope with vertices the Hilbert basis elements is compressed.
Moreover, for the number of elements in $A_n$ and $B_n$ we obtain that
\[
\# A_n=(n-1)\sum_{i=0}^{n-1} (n-2)^{i}\ \ \text{and}\ \ \# B_n=(n-1)\sum_{i=0}^{n-1} (n-2)^{i}(n-i-1).
\]
Finally, the number of elements in the Hilbert basis of the monoid $\mathscr{R}_n$ is given by
\[
\#\HB(\mathscr{R}_n)=\# (A_n\cup B_n) =(n-1)\sum_{i=0}^{n-1}(n-2)^i(n-i)=(n-1)\sum_{k=0}^{n-1}\sum_{i=0}^k (n-2)^{i}.
\]
\end{theorem}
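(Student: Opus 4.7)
My plan is to verify the characterization in three stages followed by a routine count. First, I record the defining inequalities of $\mathscr{R}_n$ in $\RR^{n(n-1)}$: positivity $g_{ij}\geq 0$ for $i\neq j$, ordering $g_i - g_{i+1}\geq 0$ for $1\leq i\leq n-1$, and runner-up consistency $(g_i - g_{i1}) - (g_{i+1}-g_{i+1,1})\geq 0$ for $2\leq i\leq n-1$. For each element of $A_n\cup B_n$ a direct row-sum computation verifies these constraints: Type A with parameter $r$ has $g_1=\cdots=g_r=1$, $g_{r+1}=\cdots=g_n=0$, and $g_{i1}=0$ throughout, so both chains of differences are trivially nonincreasing; Type B with parameters $(r,q)$ has $g_1=\cdots=g_q=1$, $g_{q+1}=\cdots=g_n=0$, with $g_{i1}=0$ for $2\leq i\leq r$ and $g_{i1}=1$ for $r+1\leq i\leq q$, so the runner-up chain drops from $1$ to $0$ at index $r+1$ and is constant elsewhere. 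All entries lie in $\{0,1\}$, so each vector is primitive.

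Second, I show that $v\in A_n\cup B_n$ if and only if $v$ is a primitive extreme ray generator of $C_n$. For the forward inclusion I tabulate the support forms vanishing on $v$: all positivity forms except at the chosen target positions, all ordering equalities except at the unique transition from $1$ to $0$, and all runner-up equalities in the Type A case (resp.\ all but one in the Type B case); a rank computation in the dual space shows this vanishing set has codimension one, so $v$ spans an extreme ray. For the converse I fix an extreme ray generator $v\in C_n$ and case-split on (a) whether the first-column entries $g_{i1}(v)$ vanish for all $i\geq 2$, and (b) the transition pattern of the chains $(g_i(v))$ and $(g_i(v)-g_{i1}(v))$. Extremality forces enough forms to vanish that all nonzero entries of $v$ coincide, and matching to the two families recovers $v$ (up to positive scalar) as an element of $A_n\cup B_n$; primitivity then gives equality.

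Third, to obtain $\HB(\mathscr{R}_n)=A_n\cup B_n$ together with the compressed-polytope assertion, I exhibit a regular unimodular triangulation of $C_n$ whose extreme rays all lie in $A_n\cup B_n$. This simultaneously shows that every lattice point of $C_n$ is a $\ZZ_+$-combination of extreme integer generators (so the Hilbert basis cannot properly exceed $A_n\cup B_n$) and that the polytope with those vertices is compressed. An algorithmic alternative is inductive peeling: given $S\in\mathscr{R}_n$, find $v\in A_n\cup B_n$ with $S-v\in\mathscr{R}_n$ and induct on the grading. \emph{This is the main obstacle:} neither approach is evident from the definition, and each requires delicate case analysis on the first-column values and the equality pattern of the ordering chain. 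The counting is then direct: Type A with parameter $r$ yields $(n-1)(n-2)^{r-1}$ target functions (team $T_1$ has $n-1$ possible targets, each $T_i$ with $2\leq i\leq r$ has $n-2$), and Type B with $(r,q)$ yields the same number since $T_{r+1},\ldots,T_q$ are forced. Summing over $r\in\{1,\ldots,n\}$ and over $1\leq r<q\leq n$ gives $\#A_n$ and $\#B_n$; combining via the identity $1+(n-i-1)=n-i$ yields $\#\HB(\mathscr{R}_n)$, and the double-sum form follows by interchanging the order of summation.
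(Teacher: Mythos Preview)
Your plan leaves the central step undone, and you say so yourself: the assertion that $A_n\cup B_n$ generates $\mathscr{R}_n$ (equivalently, that the Hilbert basis cannot exceed the extreme integral generators) is precisely the content of the theorem, and you defer it to ``delicate case analysis'' without carrying it out. The paper does carry out the inductive peeling you sketch, and the argument is not long: given $S\in\mathscr{R}_n$, one compares $g_S$ with $\bar g_S=g_S-(0,g_{21},\dots,g_{n1})$. If some $g_i>\bar g_i$, one takes $q=\max\{i:g_i>\bar g_i\}$ and the largest $r<q$ with $g_r=\bar g_r$, and subtracts a suitable $H\in B_n$ with parameters $(r,q)$; this reduces the first column. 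Once $g_S=\bar g_S$, the first column is zero and one subtracts elements of $A_n$ exactly as in the proof for $\mathscr{M}_n$. This is the missing idea in your outline.

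There is also a logical error in your third stage: exhibiting \emph{one} regular unimodular triangulation of the polytope does not establish that the polytope is compressed. Compressedness means that \emph{every} pulling triangulation is unimodular, which is strictly stronger. The paper obtains compressedness by a different route that you are missing entirely: it introduces the auxiliary grading $\deg_1 S=g_1$, observes that every element of $A_n\cup B_n$ has $\deg_1=1$, and hence the cross-section polytope $\mathscr{P}_n=C_n\cap\{\deg_1=1\}$ is a $(0,1)$-polytope. The defining inequalities of $\mathscr{P}_n$ then all have the form $0\le\ell\le 1$ for integer linear forms $\ell$, so $\mathscr{P}_n$ has width one with respect to every facet, and the Ohsugi--Hibi/Santos criterion gives compressedness directly. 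Compressedness in turn implies that every lattice point of $\mathscr{P}_n$ is a vertex, whence $A_n\cup B_n$ coincides with the set of extreme integral generators; combined with the generation step, this pins down the Hilbert basis. Your rank/codimension computation for extremality is unnecessary once this is in place.

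Your counting argument and the verification that $A_n\cup B_n\subset\mathscr{R}_n$ are fine.
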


\begin{proof}
First we show that $A_n\cup B_n$ is a set of generators of $\mathscr{R}_n$, i.e. we have to prove that each element in $\mathscr{R}_n$ can be written as a linear combination with positive coefficients of elements of $A_n\cup B_n$.
\medskip

Let $S \in \mathscr{R}_n$ and consider the vectors $g_S$ and
$$
\bar{g}_S=(\bar{g}_1,\bar{g}_2,\bar{g}_3,\ldots,\bar{g}_n)=g_S-(0,g_{21},g_{31},\ldots,g_{n1}).
$$
(The vectors $\bar{g}_S$ are reflecting the situation after the disqualification of initial top team.)
From the definition of $S \in \mathscr{R}_n$ we get the inequalities
$$
g_1\ge g_2 \ge g_3\ge \cdots\ge g_n \ \text{and} \ \bar{g}_2 \ge \bar{g}_3\ge \cdots\ge \bar{g}_n.
$$
Here two possible situations arise:
\begin{enumerate}
\item[\textbf{(1)}] There exists $i$ such that $g_i>\bar{g}_i$;
\item[\textbf{(2)}] $g_S=\bar{g}_S$.
\end{enumerate}
In the following we prove that situation \textbf{(1)} can be reduced to situation \textbf{(2)}.

\noindent \textbf{(1)} Assume the existence of an index $i$ such that $g_i>\bar{g}_i$. We show that $S=K+\sum_{i} H_i$, where $H_i\in B_n$ and $g_K=\bar{g}_K$. Let $q=\mathrm{max}\{i|g_i>\bar{g}_i \}$. Then either $q=n$, or $q<n$ and $g_q>\bar{g}_q\ge\bar{g}_{q+1}= g_{q+1}$. So
we have that either
$$
g_1\ge g_2 \ge g_3\ge \cdots\ge g_{q=n},
$$
or
$$
g_1\ge g_2 \ge g_3\ge \cdots\ge g_q>g_{q+1}\ge g_{q+2}\ge \cdots \ge g_n.
$$

Since $g_1=\bar{g}_1$, there exist always $r<q$ such that
$g_r=\bar{g}_r$ and $g_i>\bar{g}_i$ for $i=r+1,\ldots,q$. Further $\bar{g}_r=g_r\ge g_{r+1} > \bar{g}_{r+1}$, so we have that
$$
\bar{g}_1=g_1\ge g_2\ge\bar{g}_2 \ge \bar{g}_3\ge \cdots\ge \bar{g}_r>\bar{g}_{r+1}\ge\bar{g}_{r+2}\ge \cdots \ge \bar{g}_n.
$$

We may assume $\bar{g}_1\ge\bar{g}_2 \ge  \cdots\ge \bar{g}_r > 0$; this means that for $i=1,2, \ldots, r$ we may associate a $j_i> 1$ such that $g_{i,j_i} \neq 0$. We consider $H \in B_n$ defined by
$$
H_{ab}=\left \{
\begin{array}{lll}
 1 & \text{if}\ \{a,b\} = \{i,j_i\}\ \text{for}\ i=1,\ldots,r, \\
 1 & \text{if}\ \{a,b\} = \{i,1\}\ \text{for}\ i=r+1,\ldots,q, \\
 0 & \text{else.}
\end{array}
\right.
$$

\noindent Then $g_{S-H}=(g_1-1,\ldots,g_q-1, g_{q+1},\ldots,g_n)$ and $\bar{g}_{S-H}=(\bar{g}_1-1,\ldots,\bar{g}_r-1, \bar{g}_{r+1},\ldots,\bar{g}_n)$.
It follows that they satisfy the inequalities from the definition of $S \in \mathscr{R}_n$,
so we get that $S-H \in \mathscr{R}_n$. This procedure can be repeated with $S-H$ in place of $S$ until all elements on the first column are $0$.

\noindent \textbf{(2)} Now assume $g_S=\bar{g}_S$. The following argument is similar to the one given in part \textbf{(1)} of the proof of \cite[Theorem 4.1]{IM}. Observe that $A_n$ is the subset of $\mathscr{R}_n$ containing all score sheets $P\in \mathscr{R}_n$ such that $g_P=\bar{g}_P$ and
the vector $g_P$ is one of the following:
\begin{align*}
p_1&=(1,0,0,\ldots , 0);\\
p_2&=(1,1,0,\ldots , 0);\\
p_3&=(1,1,1,\ldots , 0);\\
\dots&\ldots \ldots \ldots \ldots \ldots \dots  \\
p_n&=(1,1,1,\ldots , 1).
\end{align*} Consider $S\in \mathscr{R}_n$ such that $g_S=\bar{g}_S$.  Then there is a unique writing
\[
g_S=\sum_{i=1}^{n} a_ip_i \ \ \mbox{~with~\ } a_i \in \mathbb{Z}_{+}.
\]
We may assume that $a_r \neq 0$; this means that for $i=1, \ldots, r$ we may associate $j_i>1$ such that $g_{i,j_i} \neq 0$. We consider $H \in A_n$ which has precisely $1$ at the position $(i,j_i)$, and $0$ otherwise. Then $S-H \in \mathscr{R}_n$. Since $|\!|S-H|\!| < |\!|S|\!|$, it follows by induction that $S=\sum_{i} H_i$ with $H_i\in A_n$.
\medskip

We have shown that $A_n\cup B_n$ is a set of generators of $\mathscr{R}_n$. From this point on, one can directly show that the generators $A_n\cup B_n$ form a Hilbert basis of the monoid $\mathscr{R}_n$
as it is done in proof of \cite[Theorem 4.1]{IM}. (It is enough to show that for every distinct $A, B\in A_n\cup B_n$ the difference $A-B \not\in\mathscr{R}_n$, then apply \cite[Corollary 2.3]{IM}.)
However, in the following we present a more conceptual proof that
is inspired by a remark of Winfried Bruns on \cite{IM}. (Notice that for the following argument it is enough to show that $A_n\cup B_n$ generate the cone $C_n=\RR_+\mathscr{R}_n$.)
\medskip

Apart from the $\ZZ$-grading given by $\deg S=|\!|S|\!|=g_1+\cdots + g_n$ considered in Section \ref{sec:scoresheets}, there is another interesting $\ZZ$-grading, namely
$$\deg_1 S=|\!|S|\!|_1=g_1=\bar{g}_1.$$
The cone $C_n=\RR_+\mathscr{R}_n$, together with the grading $\deg_1$, define the rational
polytope
\[
\mathscr{P}_n=C_n\cap \{x\in \RR^{n(n-1)}:\deg_1 x=1\}.
\]

We have $C_n=\RR_+(A_n\cup B_n)$, which means that the set of extreme integral generators of the cone $C_n$ is a subset of $A_n\cup B_n$.
Remark that for all $S\in A_n\cup B_n$ it holds that $\deg_1 S=1$. Then $A_n\cup B_n \subset \mathscr{P}_n$ and since it contains
the extreme integral generators of the cone $C_n$, it follows that the set of vertices $V_n$ of $\mathscr{P}_n$ is a subset of $A_n\cup B_n$. Moreover, $\mathscr{P}_n$
is a $(0,1)$-integral polytope.
In other words, we conclude that the monoid $\mathscr{R}_n$ is the Ehrhart
monoid of the $(0,1)$-integral polytope $\mathscr{P}_n$.
\medskip

Further, $\mathscr{P}_n$ is the solution of the system of linear inequalities
\begin{align*}
g_1=1;& \\
0\le g_{i}-g_{i+1} \le 1& \ \ \ \forall \ i=1,\dots,n;\\
0\le \bar{g}_{i}-\bar{g}_{i+1} \le 1& \ \ \ \forall \ i=2,\dots,n;\\
0\le g_{ij} \le 1& \ \ \ \forall \ 1\le i\not=j\le n.
\end{align*}

It follows directly from Ohsugi and Hibi \cite[Theorem 1.1]{OH} that the polytope $\mathscr{P}_n$ is \emph{compressed}.
(The class of compressed polytopes was introduced by Stanley in \cite{S1}).
In fact, $\mathscr{P}_n$ has \emph{width one} with respect to all its facets (i.e. it lies between the hyperplane spanned by
one facet and the next parallel lattice hyperplane, for all its facets) and such a polytope is compressed by an unpublished
result due to Francisco Santos (see \cite[Section 8.3 Compressed Polytopes]{HNP} for a detailed discussion).
\medskip

In particular, all lattice points in $\mathscr{P}_n$ are vertices (see \cite[Definition 8.9]{HNP}), which means that $V_n=A_n\cup B_n$, i.e.
$A_n\cup B_n$ equals the set of extreme integral generators of the cone $C_n$.
Since the set of extreme integral generators of the cone $C_n$ is included in the Hilbert basis of the monoid $\mathscr{R}_n$,
which in turn is included in any set of generators of $\mathscr{R}_n$, we conclude that Hilbert basis of the monoid $\mathscr{R}_n$ equals the set $A_n\cup B_n$.
\end{proof}

\begin{corollary}\label{prop:compressed}
Let $\mathscr{P}_n$ be defined as above. Then every pulling triangulation of $\mathscr{P}_n$ is unimodular. Moreover, since $\mathscr{P}_n$ contains the Hilbert basis of $\mathscr{R}_n$, $\mathscr{P}_n$ is integrally closed.
\end{corollary}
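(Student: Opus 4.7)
The plan is to deduce both assertions directly from the geometric properties of $\mathscr{P}_n$ that were established inside the proof of Theorem~\ref{theo:HB}: namely, that $\mathscr{P}_n$ is a compressed $(0,1)$-polytope, and that its set of lattice points (which coincides with its set of vertices) is precisely $\HB(\mathscr{R}_n)=A_n\cup B_n$.

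First I would handle the claim about pulling triangulations. By a classical theorem going back to Stanley (and reviewed for instance in the discussion of compressed polytopes in \cite[Section 8.3]{HNP}), a lattice polytope is compressed if and only if every one of its pulling triangulations is unimodular. Since $\mathscr{P}_n$ has already been shown to be compressed in the proof of Theorem~\ref{theo:HB}, this characterization yields the unimodularity of every pulling triangulation with no further work.

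For the integral closure of $\mathscr{P}_n$, I would argue as follows. Every Hilbert basis element $H\in A_n\cup B_n$ satisfies $\deg_1 H = g_1 = 1$ by construction, so $\HB(\mathscr{R}_n)\subset \mathscr{P}_n$. Given any lattice point $x$ in the dilate $k\mathscr{P}_n = C_n\cap\{\deg_1 = k\}$, the fact that $\HB(\mathscr{R}_n)$ generates $\mathscr{R}_n$ as an affine monoid produces a decomposition $x=\sum_i c_i H_i$ with $c_i\in\ZZ_+$ and $H_i\in \HB(\mathscr{R}_n)$; applying $\deg_1$ to both sides forces $\sum_i c_i = k$, so $x$ is a sum of exactly $k$ lattice points of $\mathscr{P}_n$ counted with multiplicity, which is precisely the integer decomposition property.

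No step here is expected to present a serious obstacle, because once Theorem~\ref{theo:HB} is in hand both claims are formal consequences. The only points requiring a little care are to invoke the correct characterization of compressed polytopes (rather than the weaker statement that merely one unimodular pulling triangulation exists), and to notice that the grading $\deg_1$ is exactly the dilation parameter for $\mathscr{P}_n$, so that expressing a lattice point of $k\mathscr{P}_n$ as a monoid combination of Hilbert basis elements automatically uses exactly $k$ summands.
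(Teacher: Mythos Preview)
Your proposal is correct and matches the paper's intent: the corollary is stated without proof, so both claims are meant to follow immediately from the facts established inside the proof of Theorem~\ref{theo:HB}---that $\mathscr{P}_n$ is compressed and that its lattice points coincide with $\HB(\mathscr{R}_n)$---exactly as you argue. The only implicit step you might make fully explicit is that $\mathscr{R}_n=C_n\cap\ZZ^{n(n-1)}$ (being cut out by linear inequalities), so every lattice point of $k\mathscr{P}_n$ indeed lies in $\mathscr{R}_n$ and admits the Hilbert-basis decomposition you use.
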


Note that, on the one hand, a compressed polytope is not automatically a \emph{totally unimodular} polytope (i.e. a polytope with all triangulations unimodular).
On the other hand, it is true that a totally unimodular polytope is compressed. We will come back to this in Remark \ref{rem:tri}.

\begin{example}
The Hilbert basis of the monoid $\mathscr{R}_3$ consists of 12 elements, namely
\small
$$
\begin{array}{cccc}
\begin{array}{c|c c c}
&T_1&T_2 &T_3\\
\hline
T_1&\ast&0&1\\
T_2&0&\ast &0\\
T_3&0&0&\ast
\end{array}
&
\begin{array}{c|c c c}
&T_1&T_2 &T_3\\
\hline
T_1&\ast&1&0\\
T_2&0&\ast &0\\
T_3&0&0&\ast
\end{array}
&
\begin{array}{c|c c c}
&T_1&T_2 &T_3\\
\hline
T_1&\ast&0&1\\
T_2&0&\ast &1\\
T_3&0&0&\ast
\end{array}
&
\begin{array}{c|c c c}
&T_1&T_2 &T_3\\
\hline
T_1&\ast&0&1\\
T_2&1&\ast &0\\
T_3&0&0&\ast
\end{array}
\end{array}
$$
$$
\begin{array}{cccc}
\begin{array}{c|c c c}
&T_1&T_2 &T_3\\
\hline
T_1&\ast&1&0\\
T_2&0&\ast &1\\
T_3&0&0&\ast
\end{array}
&
\begin{array}{c|c c c}
&T_1&T_2 &T_3\\
\hline
T_1&\ast&1&0\\
T_2&1&\ast &0\\
T_3&0&0&\ast
\end{array}
&
\begin{array}{c|c c c}
&T_1&T_2 &T_3\\
\hline
T_1&\ast&0&1\\
T_2&0&\ast &1\\
T_3&0&1&\ast
\end{array}
&
\begin{array}{c|c c c}
&T_1&T_2 &T_3\\
\hline
T_1&\ast&0&1\\
T_2&0&\ast &1\\
T_3&1&0&\ast
\end{array}
\end{array}
$$
$$
\begin{array}{cccc}
\begin{array}{c|c c c}
&T_1&T_2 &T_3\\
\hline
T_1&\ast&0&1\\
T_2&1&\ast &0\\
T_3&1&0&\ast
\end{array}
&
\begin{array}{c|c c c}
&T_1&T_2 &T_3\\
\hline
T_1&\ast&1&0\\
T_2&0&\ast &1\\
T_3&0&1&\ast
\end{array}
&
\begin{array}{c|c c c}
&T_1&T_2 &T_3\\
\hline
T_1&\ast&1&0\\
T_2&0&\ast &1\\
T_3&1&0&\ast
\end{array}
&
\begin{array}{c|c c c}
&T_1&T_2 &T_3\\
\hline
T_1&\ast&1&0\\
T_2&1&\ast &0\\
T_3&1&0&\ast
\end{array}
\end{array}
$$
\normalsize
\end{example}
\bigskip

\subsection{Gorensteinness of the monoid $\mathscr{C}_n$} In the second part of this section we prove the following remarkable fact.

\begin{theorem}\label{theo:Gorenstein} The monoid $\mathscr{C}_n$ is Gorenstein.
\end{theorem}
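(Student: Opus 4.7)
The plan is to apply criterion~(3) of Theorem~\ref{theo:Bruns_Gorenstein}: I will exhibit one element $x\in \mathscr{C}_n$ at which every support form of the cone $\RR_+\mathscr{C}_n$ takes the value~$1$. Before invoking that theorem one needs normality of $\mathscr{C}_n$, which, since $\mathscr{C}_n$ is by construction the set of lattice points of $\RR_+\mathscr{C}_n$ inside $\ZZ^{n(n-1)}$, amounts to checking that $\gp(\mathscr{C}_n)=\ZZ^{n(n-1)}$. For $i<j$ the single-goal score sheet $e_{ij}$ is already consistent; for $i>j$ I will show that both $x$ (defined below) and $x-e_{ij}$ lie in $\mathscr{C}_n$, so that $e_{ij}\in\gp(\mathscr{C}_n)$ by subtraction.

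My candidate is the score sheet $x$ with $g_{ij}(x)=2$ for $i<j$ and $g_{ij}(x)=1$ for $i>j$. The defining inequalities of $\RR_+\mathscr{C}_n$ come in two flavours: non-negativity $g_{ij}\ge 0$, and ordering $S_i(I)-S_j(I)\ge 0$, where $S_i(I)=\sum_{k\in I\setminus\{i\}}g_{ik}$ and $(i,j)$ runs over consecutive pairs $i<j$ in every $I\subseteq\{1,\ldots,n\}$ with $|I|\ge 2$. All coefficients involved lie in $\{-1,0,1\}$ with overall gcd~$1$, so each of these linear forms is already primitive, and hence coincides with the corresponding support form whenever it is facet-defining. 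Among the non-negativity conditions, the ones with $i<j$ are redundant (they follow from $g_{ij}\ge g_{ji}$ combined with $g_{ji}\ge 0$); the remaining $g_{ji}$, $i<j$, evaluate to $g_{ji}(x)=1$.

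The computational heart is the identity $S_i(I)-S_j(I)=1$ for every consecutive pair $i<j$ in every admissible $I$. Introducing the counts $a_i=|\{k\in I : k>i\}|$ and $b_i=|\{k\in I : k<i\}|$, one reads off $S_i(I)=2a_i+b_i$ directly from the definition of $x$. Since $i$ and $j$ are consecutive in $I$, no element of $I$ lies strictly between them, so $a_i=a_j+1$ and $b_j=b_i+1$; hence $S_i(I)-S_j(I)=2\cdot 1-1=1$. This simultaneously confirms that $x\in\mathrm{int}(\mathscr{C}_n)$ and that every ordering support form takes the value~$1$ at~$x$. Combined with the non-negativity case, this yields $\sigma(x)=1$ for every support form $\sigma$ of $\RR_+\mathscr{C}_n$, and Theorem~\ref{theo:Bruns_Gorenstein}(3) delivers the Gorenstein property of $\mathscr{C}_n$.

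The hard part will really be guessing the candidate $x$; once it is in hand, the verification reduces to the short counting argument above. The only auxiliary work is the routine normality check and the redundancy analysis that isolates which non-negativity constraints are actually facet-defining.
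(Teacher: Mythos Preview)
Your approach is essentially identical to the paper's: the same element $x$ (which the paper calls $e_n$), the same redundancy observation for the upper-triangular non-negativity constraints, and the same verification that every remaining defining form evaluates to~$1$. Your counting argument $S_i(I)-S_j(I)=2(a_i-a_j)+(b_i-b_j)=2-1=1$ is in fact a cleaner write-up than the paper's.

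There is one small slip. You claim that for $i<j$ the single-goal sheet $e_{ij}$ is already consistent, hence in $\mathscr{C}_n$. This fails whenever $i>1$: the total goal vector is $(0,\dots,0,1,0,\dots,0)$ with the $1$ in position $i$, so already the ordered-score-sheet condition $g_1\ge g_i$ is violated. The fix is immediate and uses exactly the trick you already employ for $i>j$: check that $x-e_{ij}\in\mathscr{C}_n$ for \emph{all} $i\neq j$. When $i<j$ the entry $g_{ij}$ drops from $2$ to $1$, all entries remain non-negative, and for any $I$ the difference $S_a(I)-S_b(I)$ either stays at $1$ or moves to $0$ or $2$; in every case it remains~$\ge 0$. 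Hence $e_{ij}=x-(x-e_{ij})\in\gp(\mathscr{C}_n)$ for every $i\neq j$, and $\gp(\mathscr{C}_n)=\ZZ^{n(n-1)}$ follows. The paper, incidentally, skips this normality verification entirely; you were right to include it.
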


\begin{proof} First we want to write the cone $C=\RR_+ \mathscr{C}_n\subset\RR^{n(n-1)}$ (which is generated by $\mathscr{C}_n$) in the form
\[
C=H^{+}_{\sigma_{1}}\cap \ldots\cap H^{+}_{\sigma_{s}},
\tag{$\ast$}
\]
i.e.~as an  intersection of halfspaces defined by linear forms on $\RR^{n(n-1)}$ (not necessary irredundant).

In order to make our proof easier to understand  we may write in the following both the elements $x$ of $\RR^{n(n-1)}$ and the linear forms $\sigma$ on $\RR^{n(n-1)}$ in the form
$$
\begin{array}{cc}
x=
\begin{array}{c|c c c c}
&1&2& \cdots &n\\
\hline
1&\ast&x_{12}&\ldots&x_{1n}\\
2&x_{21}&\ast&\ldots&x_{2n}\\
\vdots&\vdots&&\ddots&\vdots\\
n&x_{n1}&x_{n2}&\ldots&\ast\\
\end{array}
&
\sigma=
\begin{array}{c|c c c c}
& 1&2& \cdots &n\\
\hline
1&\ast&\sigma_{12}&\ldots&\sigma_{1n}\\
2&\sigma_{21}&\ast&\ldots&\sigma_{2n}\\
\vdots&\vdots&&\ddots&\vdots\\
n&\sigma_{n1}&\sigma_{n2}&\ldots&\ast\\
\end{array}
\end{array}
$$
Then
$$
\sigma(x)=\sum_{\substack{a,b= 1\\ \\ a\neq b}}^{a,b= n} \sigma_{ab} x_{ab}.
$$
Let $\delta^{ij}$ be the linear form on $\RR^{n(n-1)}$ defined by
$$
\delta_{ab}^{ij}=\left \{
\begin{array}{ll}
1 & \text{if}\ \{a,b\} = \{i,j\}, \\
0 & \text{else.}
\end{array}
\right.
$$
The inequality $\delta^{ij}(x)\ge 0$ is equivalent to the condition $x_{ij}\ge 0$.

Let $\mathcal{P}([n])$ be the power set of the set $[n]=\{1,\ldots,n\}$. Then we can write
$$
C=\Big[\bigcap_{\substack{i,j\in [n]\\ \\ i< j}} H^{+}_{\delta^{ij}} \Big]\cap\Big[\bigcap_{\substack{i,j\in [n]\\ \\ i> j}} H^{+}_{\delta^{ij}} \Big]\cap\Big[\bigcap_{\substack{P\in \mathcal{P}([n])\\ \\\#P\ge 2}} (\bigcap_{k=1}^{k<\#P}H^{+}_{\sigma_{k}^P})\Big]
$$
where, for each $P$, the linear forms $\sigma_{k}^P$ on $\RR^{n(n-1)}$ correspond to the conditions that endowed the principal submatrix induced by $P$ with the structure of ordered score sheet. (By principal submatrix induced by $P$ we understand the submatrix formed by selecting, for each $i\in P$, the $i$-th row and column.) More precisely, let $\Iter(k,P)$ be the $k$-th element of $P$. Then the linear form
$\sigma_{k}^P$ is defined by
$$
(\sigma_{k}^P)_{ab}=\left \{
\begin{array}{lll}
 1 & \text{if}\ a=\Iter(k,P)\ \text{and}\ b\in P\setminus \{\Iter(k,P)\}, \\
-1 & \text{if}\ a=\Iter(k+1,P)\ \text{and}\ b\in P\setminus \{\Iter(k+1,P)\}, \\
 0 & \text{else.}
\end{array}
\right.
$$

Now assume $\#P=2$, that is $P=\{i,j\}$. We may set $i<j$. There is only one linear form
$\sigma_{1}^P=\sigma^{ij}$ corresponding to the only condition induced by $P$, and this form on $\RR^{n(n-1)}$ is defined by
$$
\sigma_{ab}^{ij}=\left \{
\begin{array}{lll}
 1 & \text{if}\ \{a,b\} = \{i,j\}, \\
-1 & \text{if}\ \{a,b\} = \{j,i\}, \\
 0 & \text{else.}
\end{array}
\right.
$$
The inequality  $\sigma^{ij}(x)\ge 0$ is equivalent to the inequality $x_{ij}-x_{ji}\ge 0$. Since we also have the inequality $x_{ji}\ge 0$, we
deduce that the inequality  $x_{ij}\ge 0$ is redundant. In other words, the halfspaces $H^{+}_{\delta^{ij}}$ from the writing of $C$ are redundant if $i<j$.
We  conclude that
\[
C=\Big[\bigcap_{\substack{i,j\in [n]\\ \\ i> j}} H^{+}_{\delta^{ij}} \Big]\cap\Big[\bigcap_{\substack{P\in \mathcal{P}([n])\\ \\\#P\ge 2}} (\bigcap_{k=1}^{k<\#P}H^{+}_{\sigma_{k}^P})\Big].\tag{$\ast \ast$}
\]

For $t\in \NN$ with $t\ge2$, let $e_t\in\RR^{t(t-1)}$ be the element
$$
\begin{array}{c|c c c c c}
&1&2& \cdots & t-1 & t\\
\hline
1&\ast&2&\ldots& 2 & 2 \\
2& 1 &\ast&\ldots& 2 & 2\\
\vdots&\vdots&&\ddots&\vdots&\vdots\\
t-1& 1 & 1 &\ldots &\ast & 2\\
t& 1 & 1 &\ldots & 1 &\ast.\\
\end{array}
$$

First, it is clear that for $i>j$ we have that $\delta^{ij}(e_n)=1$.
\medskip

Next, observe that for any $P\in\mathcal{P}([n])$ with $\#P=t$, we have that the principal submatrix induced by $P$ on $e_n$
is exactly $e_t$.
\medskip

Finally, if we denote by $e_{t}^k$ the sum of elements of $e_t$ on the $k$-row for $t\in\NN$, $t\ge 2$ and for all $1\leq k \leq t$,
it is easily seen that
$$
e_{t}^k-e_{t}^{k+1}=1\ \text{for all}\ 1\leq k \leq t-1,
$$
since any two consecutive rows (or columns) of $e_t$ differ by precisely one element. In other words, we have that
$$
\sigma_{k}^P(e_n)=1,
$$
for all $P$ and $k$.
\medskip

The representation $(**)$ is a representation of type $(*)$  as an intersection of halfspaces (not necessary irredundant) and moreover there exists $e_n \in \mathscr{C}_n$
such that $\sigma(e_n)=1$ for all $\sigma$ in the writing. (It is clear that in any such representation a halfspace can be omitted if it contains the intersection of the remaining halfspaces, thus obtaining an irredundant representation.) Then Theorem \ref{theo:Bruns_Gorenstein} can be applied to obtain the conclusion.
\end{proof}

\begin{remark} In fact, the representation $(**)$ is irredundant.
Assume that representation $(**)$ is not irredundant. This implies that there exists a $\sigma$ which is not an extreme integral generator of the dual cone,
which is generated in $\RR^{n(n-1)}$ by all $\delta^{ij}$ with $i>j$ and all $\sigma_{k}^P$.
Then it may be written as a linear combination  of the extreme integral generators with nonnegative \emph{rational} coefficients:
$$
\sigma = \sum_{q} \alpha_q\delta^{ij}_q + \sum_{r} \beta_r\sigma_{kr}^P.
$$
It is immediately clear that only the trivial writing is possible for $\delta^{ij}$, so we can assume that $\sigma$ is of type $\sigma_{k}^P$.
Then $|\!|\sigma|\!|=0$. Since $|\!|\sigma_{kr}^P|\!|=0$ for all $r$ and $\alpha_q\ge 0$ for all $q$ we deduce that $\alpha_q=0$ for all $q$. It follows that
$$
\sigma = \sum_{r} \beta_r\sigma_{kr}^P.
$$
Note that for each $\sigma_{k}^P$ there exists a unique pair $(a,b)$ such that $(\sigma_{k}^P)_{ab}=1$ and $(\sigma_{k}^P)_{ba}=-1$, which we call \emph{signature}.
(In fact, this is only possible if $a=\Iter(k,P)$ and $b=\Iter(k+1,P)$.) For any other pair $(a,b)$ with $a,b\in [n]$ it is true that either $(\sigma_{k}^P)_{ab}=0$ or $(\sigma_{k}^P)_{ba}=0$.
 Now we can deduce that all summands  must have the same signature as $\sigma$. (Otherwise, assuming a summand in the linear combination has a different signature $(a',b')$, $\sigma$ would have nonzero entries $(\sigma_{k}^P)_{a'b'}$ and $(\sigma_{k}^P)_{b'a'}$.) By looking at the sum in the entries corresponding to the signature we deduce that  $\sum_{r} \beta_r=1$. Finally, if we look at the sum in any nonzero entry of $\sigma$, we see that each $\sigma_{kr}^P$ has a nonzero entry at the same place, so $\sigma_{kr}^P = \sigma$.
\end{remark}

The Gorensteinness of the monoid $\mathscr{C}_n$ has several interesting consequences.

\begin{corollary}\label{cor:Gorenstein}
Let $e_n$ be as in the proof of Theorem \ref{theo:Gorenstein} and set $v=\deg e_n=\frac{3n(n-1)}{2}$. Then:
\begin{enumerate}
\item $\mathrm{int}(\mathscr{C}_n)=e_n+\mathscr{C}_n$;
\item Consider the presentation of $H_{\mathscr{C}_n}(t)$ as in Theorem \ref{theo:HS}. Then the coefficients of $R(t)$ satisfy the relations $h_i=h_{u-i}$ for   $i=0,\ldots,u$ (in this case we say that the $h$-vector is palindromic);
\item Consider the presentation of $H(\mathscr{C}_n,t)$ as in Theorem \ref{theo:HP}, then
$$
Q(-k)=-Q(k-v)\ \ \text{for all}\ \ k\in \ZZ.
$$
\end{enumerate}
\end{corollary}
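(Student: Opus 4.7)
The plan is to derive all three assertions directly from the Gorenstein property established in Theorem \ref{theo:Gorenstein}, combined with Stanley reciprocity for normal affine monoids.

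First I would obtain part (1) as an immediate consequence of Theorem \ref{theo:Bruns_Gorenstein}. The monoid $\mathscr{C}_n$ is normal since it arises as the intersection of the rational cone $C = \RR_+\mathscr{C}_n$ with the ambient lattice $\ZZ^{n(n-1)}$. The proof of Theorem \ref{theo:Gorenstein} explicitly constructs $e_n \in \mathscr{C}_n$ with $\sigma(e_n) = 1$ for every support form of $C$, so condition (3) of Theorem \ref{theo:Bruns_Gorenstein} holds with $x = e_n$; the equivalence with (2) yields $\mathrm{int}(\mathscr{C}_n) = e_n + \mathscr{C}_n$. As a byproduct, since all support forms are strictly positive on $e_n$, the cone $C$ is full-dimensional of dimension $d = n(n-1)$.

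For parts (2) and (3) I would invoke Stanley reciprocity for normal affine monoids of rank $d$ with positive grading, which gives the rational-function identity
\[
H_{\mathrm{int}(\mathscr{C}_n)}(t) = (-1)^d\, H_{\mathscr{C}_n}(t^{-1}),
\]
and the quasipolynomial identity $\#\{x \in \mathrm{int}(\mathscr{C}_n) : \deg x = k\} = (-1)^{d-1} Q(-k)$. By part (1), the interior Hilbert series equals $t^v H_{\mathscr{C}_n}(t)$ and the interior quasipolynomial equals $Q(k-v)$. Substituting $H_{\mathscr{C}_n}(t) = R(t)/(1-t^e)^d$ and simplifying $H_{\mathscr{C}_n}(t^{-1})$ by extracting $t^{-ed}$ from the denominator collapses the first identity to the polynomial equation $t^v R(t) = t^{ed} R(t^{-1})$. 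Comparing the extreme exponents on both sides (using $h_0 = R(0) = 1 \neq 0$ and $h_u \neq 0$) forces $u = ed - v$, and comparing the remaining coefficients gives $h_i = h_{u-i}$, which is part (2). For part (3), combining $Q(k-v) = (-1)^{d-1} Q(-k)$ with the parity observation $d = n(n-1) \equiv 0 \pmod 2$, so $(-1)^{d-1} = -1$, yields $Q(-k) = -Q(k-v)$ as an identity of quasipolynomials, and hence for every $k \in \ZZ$.

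I do not expect a substantive obstacle: the argument is a clean application of the Gorenstein structure together with standard reciprocity. The only care required is in bookkeeping the shift $v$ and the sign arising from the parity of $d$; the essential geometric input, namely the Gorenstein element $e_n$ of degree $v = 3n(n-1)/2$, has already been produced and verified in the proof of Theorem \ref{theo:Gorenstein}.
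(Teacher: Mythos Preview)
Your proposal is correct and follows essentially the same route as the paper: part (1) via Theorem~\ref{theo:Bruns_Gorenstein}, and parts (2) and (3) via Stanley reciprocity combined with the interior shift $\mathrm{int}(\mathscr{C}_n)=e_n+\mathscr{C}_n$. The only difference is presentational: the paper cites the relevant results from \cite{BG} (Corollaries~6.42, 6.43 and Proposition~6.47) and adds a remark that multiplying numerator and denominator by $1+t^{e_i}+\cdots+t^{e-e_i}$ preserves palindromicity, whereas you unpack the reciprocity identity $t^v R(t)=t^{ed}R(t^{-1})$ directly in the $(1-t^e)^d$ presentation, which sidesteps that remark.
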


\begin{proof} Statement (1) follows from Theorem \ref{theo:Bruns_Gorenstein}. Assertion (2) is deduced from \cite[Corollary 6.43 (b)]{BG} with the remark that all multiplications with
$1+t^{e_i}+\cdots+t^{e-e_i}$ (where $e_i$ is a divisor of $e$) preserve the palindromic property. Finally, (3) follows from (1) by applying \cite[Corollary 6.42]{BG}, \cite[Corollary 6.43 (a)]{BG} and \cite[Proposition 6.47 (b)]{BG}.
\end{proof}

We end this section making some remarks on the Hilbert basis of the monoid $\mathscr{C}_n$.

\begin{example}\label{ex:HBC3}
The Hilbert basis of the monoid $\mathscr{C}_3$ consists of 10 elements, namely:
\small
$$
\begin{array}{cccc}
\begin{array}{c|c c c}
&T_1&T_2 &T_3\\
\hline
T_1&\ast&0&1\\
T_2&0&\ast &0\\
T_3&0&0&\ast
\end{array}
&
\begin{array}{c|c c c}
&T_1&T_2 &T_3\\
\hline
T_1&\ast&1&0\\
T_2&0&\ast &0\\
T_3&0&0&\ast
\end{array}
&
\begin{array}{c|c c c}
&T_1&T_2 &T_3\\
\hline
T_1&\ast&0&1\\
T_2&0&\ast &1\\
T_3&0&0&\ast
\end{array}
&
\begin{array}{c|c c c}
&T_1&T_2 &T_3\\
\hline
T_1&\ast&1&0\\
T_2&0&\ast &1\\
T_3&0&0&\ast
\end{array}
\end{array}
$$
$$
\begin{array}{cccc}
\begin{array}{c|c c c}
&T_1&T_2 &T_3\\
\hline
T_1&\ast&1&0\\
T_2&1&\ast &0\\
T_3&0&0&\ast
\end{array}
&
\begin{array}{c|c c c}
&T_1&T_2 &T_3\\
\hline
T_1&\ast&0&1\\
T_2&0&\ast &1\\
T_3&0&1&\ast
\end{array}
&
\begin{array}{c|c c c}

&T_1&T_2 &T_3\\
\hline
T_1&\ast&0&1\\
T_2&0&\ast &1\\
T_3&1&0&\ast
\end{array}
&
\begin{array}{c|c c c}
&T_1&T_2 &T_3\\
\hline
T_1&\ast&1&0\\
T_2&0&\ast &1\\
T_3&0&1&\ast
\end{array}
\end{array}
$$
$$
\begin{array}{cc}
\begin{array}{c|c c c}
&T_1&T_2 &T_3\\
\hline
T_1&\ast&1&1\\
T_2&1&\ast &0\\
T_3&1&0&\ast
\end{array}
&
\begin{array}{c|c c c}
&T_1&T_2 &T_3\\
\hline
T_1&\ast&1&1\\
T_2&1&\ast &1\\
T_3&1&1&\ast
\end{array}
\end{array}
$$
\normalsize
\end{example}

\begin{remark}
By looking at Example \ref{ex:HBC3}, one would expect to obtain a characterization for the Hilbert basis of the monoid $\mathscr{C}_n$ in a manner similar to the characterization for the Hilbert basis of the monoid $\mathscr{R}_n$ given by Theorem \ref{theo:HB}. For $n=3,4$ it is true that the set of extreme rays coincides with the Hilbert basis, while this is not longer true for $n \geq 5$.

In particular, for $n = 5$ the table
$$
\begin{array}{c|c c c c c}
&T_1&T_2 &T_3&T_4&T_5\\
\hline
T_1&\ast&7&5 &2&3\\
T_2&3&\ast &5&4&5\\
T_3&3&5&\ast &4&2\\
T_4&0&4&1&\ast &5\\
T_5&3&5&2&0&\ast
\end{array}
$$
is an element of the Hilbert basis, but it is not an extreme integral generator of the cone $\mathbb{R}_{+}\mathscr{C}_5$.  In fact, we have $\# \mathrm{HB}(\mathscr{C}_5)=4643$, while the cardinality of the set of extreme integral generators is $3441$.
\end{remark}

This leads to the formulation of the following natural question:

\begin{question}
Is there any general description that can be given for the Hilbert basis of the monoid $\mathscr{C}_n$?
\end{question}

\section{Multiplicity, Hilbert series and Applications} \label{sec:qua}

\subsection{The case $n=3$}

This subsection will be concerned with the explicit computation of two invariants of the affine monoid of both the consistent and the runner-up consistent score sheets of a round-robin football tournament played by $n=3$ teams, namely its multiplicity and Hilbert series, using the software Normaliz \cite{Nmz}. Potential applications of these computations are also presented.
\medskip

\begin{proposition}
Let $Q_r(G)$ denote the number of runner-up consistent score sheets with given number of goals $G$. Then, $Q_r(G)$ is a quasipolynomial
of degree $5$ and period $6$. Further, the Hilbert series $\sum_{G=0}^{\infty} Q_r(G)t^G$ is
$$
\frac{1+ 4t^2 +3t^3 +6t^4+ 5t^5+ 17t^6 -2t^7+ 19t^8+ 5t^9+ 8t^{10}+ 3t^{11}+ 8t^{12} -2t^{13}+ 3t^{14}}
{(1-t)^2(1-t^3)(1-t^6)^3}.$$ Thus, in terms of a quasipolynomial formula we have
$$
Q_r(G)=\left \{ \begin{array}{ll}
1+\frac{59}{45}G  +\frac{95}{144}G^2 + \frac{215}{1296} G^3 + \frac{107}{5184} G^4 + \frac{13}{12960} G^5& \mbox{~if~} G \equiv 0 \mod 6,\\
\\
\frac{713}{1728}+\frac{10957}{12960}G  +\frac{1453}{2592}G^2 + \frac{23}{144} G^3 + \frac{107}{5184} G^4 + \frac{13}{12960}  G^5& \mbox{~if~} G \equiv 1 \mod 6,\\
\\
\frac{7}{9}+\frac{887}{810}G  +\frac{775}{1296}G^2 + \frac{23}{144} G^3 + \frac{107}{5184} G^4 + \frac{13}{12960}  G^5& \mbox{~if~} G \equiv 2 \mod 6,\\
\\
\frac{43}{64}+\frac{1573}{1440}G  +\frac{181}{288}G^2 + \frac{215}{1296} G^3 + \frac{107}{5184} G^4 + \frac{13}{12960}  G^5& \mbox{~if~} G \equiv 3 \mod 6,\\
\\
\frac{20}{27}+\frac{431}{405}G  +\frac{767}{1296}G^2 + \frac{23}{144} G^3 + \frac{107}{5184} G^4 + \frac{13}{12960}  G^5& \mbox{~if~} G \equiv 4 \mod 6,\\
\\
\frac{259}{576}+\frac{11357}{12960}G  +\frac{1469}{2592}G^2 + \frac{23}{144} G^3 + \frac{107}{5184} G^4 + \frac{13}{12960}  G^5& \mbox{~if~} G \equiv 5 \mod 6.
\end{array}
\right.
$$
\end{proposition}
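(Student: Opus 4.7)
The plan is to derive the statement by combining the general structure theorems \ref{theo:HS} and \ref{theo:HP} with a direct Normaliz computation on the cone $C_3=\RR_+\mathscr{R}_3\subset\RR^6$. First I realize $\mathscr{R}_3=C_3\cap\ZZ^6$ (normality follows from Corollary \ref{prop:compressed}), where $C_3$ is cut out from $\RR_{\ge 0}^6$ by the three linear inequalities
\[
g_{12}+g_{13}\ge g_{21}+g_{23},\qquad g_{21}+g_{23}\ge g_{31}+g_{32},\qquad g_{23}\ge g_{32},
\]
with grading $\deg S=g_1+g_2+g_3$ and dimension $d=6$.

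From the twelve Hilbert basis elements of $\mathscr{R}_3$ listed in the example after Theorem \ref{theo:HB} one reads off that their degrees are $1,1,2,2,2,2,3,3,3,3,3,3$, so $e=\lcm\{1,2,3\}=6$. Theorem \ref{theo:HS} then gives $H_{\mathscr{R}_3}(t)=R(t)/(1-t^6)^6$ for some $R(t)\in\ZZ[t]$, and Theorem \ref{theo:HP} gives that $Q_r$ is a quasipolynomial of degree $d-1=5$ with period dividing $6$. This immediately handles the qualitative part of the statement.

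For the explicit rational function I would feed the above inequality system into Normaliz; exploiting the unimodular triangulation furnished by Corollary \ref{prop:compressed}, Normaliz assembles $H_{\mathscr{R}_3}(t)$ from the contributions of the simplicial cones in a Stanley decomposition and, using the techniques of \cite[Section 4]{BIS}, rewrites it with the stated reduced denominator $(1-t)^2(1-t^3)(1-t^6)^3$. That the period is exactly $6$, and not a proper divisor, is then verified by checking that $R(\zeta)\neq 0$ for a primitive sixth root of unity $\zeta$. Finally the six branches of $Q_r$ are recovered by standard partial fractions, writing $H_{\mathscr{R}_3}(t)$ as a sum $\sum_{\zeta^6=1}\sum_{j} c_{\zeta,j}(1-\zeta^{-1}t)^{-j}$ and sorting powers by residue mod $6$.

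The main obstacle is arithmetic rather than conceptual: $R(t)$ has degree $14$ and each of the six branches of $Q_r$ carries six rational coefficients, so a complete verification is unwieldy by hand but routine with Normaliz. A convenient consistency check after the fact is that the common leading coefficient $\tfrac{13}{12960}$ equals the multiplicity $e(\mathscr{R}_3)$ divided by $5!$, and that the initial terms of the power-series expansion of the stated rational function match the values $Q_r(0),Q_r(1),\ldots$ produced by the six branches.
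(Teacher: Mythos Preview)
Your proposal is correct and follows the same approach as the paper: the paper offers no written proof for this proposition beyond the blanket statement at the start of Section~\ref{sec:qua} that the invariants for $n=3$ are obtained by direct computation with Normaliz. If anything, your write-up is more detailed than the paper's, since you spell out the defining inequalities of $C_3$, invoke Theorems~\ref{theo:HS} and~\ref{theo:HP} for the qualitative shape, and indicate consistency checks; the paper simply records the Normaliz output.
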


\begin{figure}[h]
\centering
\includegraphics[width=0.6\textwidth]{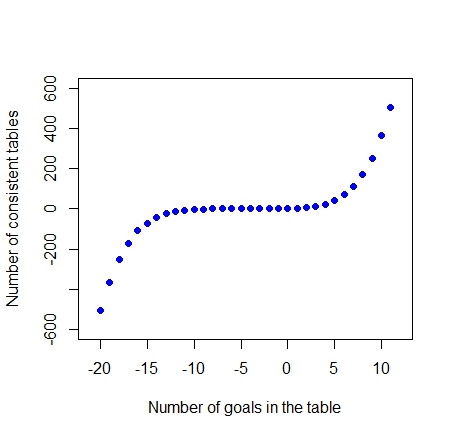}
\caption{Graph of $Q_c(G)$ }\label{fig:Q}
\end{figure}

\begin{proposition}
Let $Q_c(G)$ denote the number of consistent score sheets with given number of goals $G$. Then, $Q_c(G)$ is a quasipolynomial
of degree $5$ and period $12$. Furthermore, the Hilbert series $\sum_{G=0}^{\infty} Q_c(G)t^G$ is $R(t)/(1-t)^2(1-t^3)(1-t^6)^2(1-t^{12})$, where
\begin{align*}
R(t)&=1+ 3t^2 +1t^3 +6t^4+ 2t^5+ 10t^6 +t^7+ 13t^8+ 3t^9+ 11t^{10}+ \\
&+ 3t^{11}+ 13t^{12} +t^{13}+ 10t^{14}+2t^{15}+ 6t^{16}+t^{17}+ 3t^{18}+ t^{20}.
\end{align*}

Thus, in terms of a quasipolynomial formula we have

$$
Q_c(G)=\left \{ \begin{array}{ll}
1+\frac{49}{45}G  +\frac{1}{2}G^2 + \frac{299}{2592} G^3 + \frac{91}{6912} G^4 + \frac{91}{155520} G^5& \mbox{~if~} G \equiv 0 \mod 12,\\
\\
\frac{34291}{62208}+\frac{44263}{51840}G  +\frac{14521}{31104}G^2 + \frac{889}{7776} G^3 + \frac{91}{6912} G^4 + \frac{91}{155520}  G^5& \mbox{~if~} G \equiv 1 \mod 12,\\
\\
\frac{3245}{3888}+\frac{1679}{1620}G  +\frac{947}{1944}G^2 + \frac{889}{7776} G^3 + \frac{91}{6912} G^4 + \frac{91}{155520}  G^5& \mbox{~if~} G \equiv 2 \mod 12,\\
\\
\frac{145}{256}+\frac{5327}{5760}G  +\frac{185}{384}G^2 + \frac{229}{2592} G^3 + \frac{91}{6912} G^4 + \frac{91}{155520}  G^5& \mbox{~if~} G \equiv 3 \mod 12,\\
\\
\frac{214}{243}+\frac{1649}{1620}G  +\frac{943}{1944}G^2 + \frac{889}{7776} G^3 + \frac{91}{6912} G^4 + \frac{91}{155520}  G^5& \mbox{~if~} G \equiv 4 \mod 12,\\
\\
\frac{35315}{62208}+\frac{45223}{51840}G  +\frac{14585}{31104}G^2 + \frac{889}{7776} G^3 + \frac{91}{6912} G^4 + \frac{91}{155520} G^5& \mbox{~if~} G \equiv 5 \mod 12,\\
\\
\frac{15}{16}+\frac{49}{45}G  +\frac{1}{2}G^2 + \frac{299}{2592} G^3 + \frac{91}{6912} G^4 + \frac{91}{155520} G^5& \mbox{~if~} G \equiv 6 \mod 12,\\
\\
\frac{30403}{62208}+\frac{44263}{51840}G  +\frac{14521}{31104}G^2 + \frac{889}{7776} G^3 + \frac{91}{6912} G^4 + \frac{91}{155520}  G^5& \mbox{~if~} G \equiv 7 \mod 12,\\
\\
\frac{218}{243}+\frac{1679}{1620}G  +\frac{947}{1944}G^2 + \frac{889}{7776} G^3 + \frac{91}{6912} G^4 + \frac{91}{155520}  G^5& \mbox{~if~} G \equiv 8 \mod 12,\\
\\
\frac{161}{256}+\frac{5327}{5760}G  +\frac{185}{384}G^2 + \frac{299}{2592} G^3 + \frac{91}{6912} G^4 + \frac{91}{155520}  G^5& \mbox{~if~} G \equiv 9 \mod 12,\\
\\
\frac{3181}{3888}+\frac{1649}{1620}G  +\frac{943}{1944}G^2 + \frac{889}{7776} G^3 + \frac{91}{6912} G^4 + \frac{91}{155520} G^5& \mbox{~if~} G \equiv 10 \mod 12,\\
\\
\frac{31427}{62208}+\frac{45223}{51840}G  +\frac{14585}{31104}G^2 + \frac{889}{7776} G^3 + \frac{91}{6912} G^4 + \frac{91}{155520}  G^5& \mbox{~if~} G \equiv 11 \mod 12.
\end{array}
\right.
$$
\end{proposition}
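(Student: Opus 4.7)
The strategy is computational: realize $\mathscr{C}_3 \subset \mathbb{Z}^6$ explicitly, feed the resulting cone into Normaliz \cite{Nmz}, and read off both the rational Hilbert series and its quasipolynomial expansion. Concretely, I would describe the monoid either through its $10$ Hilbert basis elements listed in Example \ref{ex:HBC3} or, preferably, through its defining system of linear inequalities (non-negativity $g_{ij}\ge 0$, the two ordering conditions on the full score sheet, and one ordering condition on each $2\times 2$ principal submatrix); both representations are accepted by Normaliz. The grading $\deg S = g_1+g_2+g_3$ assigns degrees $1,1,2,2,2,3,3,3,4,6$ to the ten Hilbert basis elements, and their least common multiple $12$ already forces, via Theorem \ref{theo:HP}, that the period of $Q_c$ divides $12$.

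Next, Normaliz computes the Hilbert series by producing a triangulation of the cone $C_3 = \mathbb{R}_+\mathscr{C}_3$, decomposing the monoid into translates of fundamental parallelepipeds of unimodular simplicial subcones, and summing the resulting geometric series. This yields the rational function $R(t)/[(1-t)^2(1-t^3)(1-t^6)^2(1-t^{12})]$ claimed in the statement; the dimension $d=6$ matches both the ambient lattice and the $d-1=5$ degree of the quasipolynomial. From this closed form the twelve polynomial components of $Q_c(G)$ are then extracted by partial fraction decomposition in $\mathbb{C}(t)$, evaluating residues at each twelfth root of unity and regrouping them according to the residue of $G$ modulo $12$.

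Finally, before trusting the output I would check two independent consequences of the Gorenstein property established in Theorem \ref{theo:Gorenstein}: by Corollary \ref{cor:Gorenstein}(2) the $h$-vector of the numerator must be palindromic of degree $20$, which is directly visible in the coefficients of $R(t)$; and by Corollary \ref{cor:Gorenstein}(3) the Ehrhart-type reciprocity $Q_c(-k) = -Q_c(k-v)$ with $v = \deg e_3 = 9$ must hold, which translates into non-trivial cross-relations among the twelve polynomial components. The main obstacle is really the bookkeeping of the partial-fraction arithmetic producing the twelve listed formulas; any arithmetic slip would violate these Gorenstein symmetries, which therefore serve as the essential error-detection mechanism in the proof.
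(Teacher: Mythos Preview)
Your proposal is correct and follows essentially the same approach as the paper: the proposition is established by direct computation with Normaliz, the paper stating explicitly at the start of Section~\ref{sec:qua} that ``this subsection will be concerned with the explicit computation \ldots\ using the software Normaliz \cite{Nmz}'' without giving any further argument. If anything, your write-up is more informative than the paper's, since you spell out why the period must divide $12$ (via the lcm of the degrees $1,1,2,2,2,3,3,3,4,6$ of the extreme integral generators listed in Example~\ref{ex:HBC3}, which for $n=3$ coincide with the Hilbert basis) and you propose the Gorenstein palindromicity and reciprocity of Corollary~\ref{cor:Gorenstein} as independent sanity checks on the output.
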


Remark that $Q(-G)=-Q(G-9)$, as follows from Theorem \ref{cor:Gorenstein}. This identity may be observed in the graph of $Q$, which is presented in Figure \ref{fig:Q}.

\begin{definition}
A function $R:\ZZ\to\CC$ is called a \emph{quasi rational function} if is the quotient of two quasi-polynomials, see \cite{E}; or, in other words,
$$
R(n)=\frac{a_s(n)n^s+a_{s-1}(n)n^{s-1}+\cdots+a_1(n)n+a_{0}(n)}{b_t(n)n^t+b_{t-1}(n)n^{t-1}+\cdots+b_1(n)n+b_{0}(n)},
$$
where $a_i,b_j:\ZZ\to\CC$ are periodic functions for $i=0,\dots,s$, $j=0,\ldots , t$ and $a_s,b_t\neq 0$.
The degree of $R$ is the difference $s-t$, and the \emph{period} of $R$ is the smallest positive
 integer $p$ such that
 $$
 a_i(n+mp)=a_i(n)\  \ \mbox{and} \ \  b_j(n+mp)=b_j(n)
 $$
 for all $n,m\in \ZZ$ and $i=0,\dots,s$, $j=0,\ldots , t$.
\end{definition}

If $M, N$ are affine monoids with $N\subseteq M$ with Hilbert quasipolynomials $Q_M, Q_N$ of periods $m$ resp.~$n$, then it makes sense to construct the quasi rational function $R=Q_N/Q_M$. This has obviously period $\mathrm{lcm}(m,n)$. Applications are illustrated in the following.

\begin{remark}
Every value of $R$ may encode a certain conditional probability if we interpret the results in an appropriate way.
\end{remark}

\begin{example}
Let $Q_r(G)$ resp.~$Q(G)$ denote the number of runner-up consistent score sheets with given number of goals $G$ resp.~the number of ordered score sheets with given number of goals $G$, \cite[Proposition 5.4]{IM}. Observe that $Q_r(G)$ resp.~$Q(G)$ are quasipolynomials of degree $5$ and period $6$. Then $R_r(G)=Q_r(G)/Q(G)$ is a quasi rational function of period $6$ and degree $0$ given by
$$
R_r(G)=\left \{ \begin{array}{ll}
\frac{4320 + 4944 G + 2026G^2 + 379 G^3 + 26 G^4}{4320 + 5064 G + 2246 G^2 + 459 G^3 + 36G^4}& \mbox{~if~} G \equiv 0 \mod 6,\\
\\
\frac{2139 + 3955 G + 2115 G^2 + 405 G^3 + 26 G^4}{2079 + 3825 G + 2205 G^2 + 495 G^3 + 36 G^4}& \mbox{~if~} G \equiv 1 \mod 6,\\
\\
\frac{2520 + 1658 G + 379 G^2 + 26 G^3}{2160 + 1638 G + 459 G^2 + 36 G^3}& \mbox{~if~} G \equiv 2 \mod 6,\\
\\
\frac{5805 + 7503 G + 2929 G^2 + 457 G^3 + 26 G^4}{6345 + 7833 G+ 3299 G^2 + 567 G^3 + 36 G^4}& \mbox{~if~} G \equiv 3 \mod 6,\\
\\
\frac{1200 + 974 G + 275 G^2 + 26 G^3}{1080 + 954 G + 315 G^2 + 36 G^3}& \mbox{~if~} G \equiv 4 \mod 6,\\
\\
\frac{1665 + 1342 G + 327 G^2 + 26 G^3}{1485 + 1332 G + 387 G^2 + 36 G^3}& \mbox{~if~} G \equiv 5 \mod 6.
\end{array}
\right.
$$

Let $E_r (G)$ resp.~$E(G)$ be the event that $S$ is a runner-up consistent score sheet  with $G$ goals resp.~the event that $S$ is an ordered score sheet with $G$ goals. Then, for every $G \in \mathbb{N}$,
$$
R_r(G)=P(E_r(G) \mid E(G)),
$$
i.e., the \emph{conditional probability} of occurring $E_r(G)$ provided $E(G)$.
\end{example}

\begin{remark}\label{limit}
In Figure \ref{fig:pro} the dot-points are the graph of $R_r$ from the above example, while the crosses the graph of the analogous quasi rational function defined by $R_c=Q_c/Q$, or in other words, the conditional probability that the event that $S$ is a runner-up consistent score sheet  with $G$ goals occurrs provided that $S$ is an ordered score sheet. The blue dotted line equals the limit
$$
\lim_{G \to \infty} R_r(G)=\frac{e(\mathscr{R}_3)}{e(\mathscr{M}_3)}=\frac{\mathrm{vol}(\mathscr{P}_3)}{\mathrm{vol}(\mathscr{P})}=\frac{13}{18}\approx  0.722222,
$$
and the red dotted line equals the limit
$$
\lim_{G \to \infty} R_c(G)=\frac{e(\mathscr{C}_3)}{e(\mathscr{M}_3)}=\frac{\mathrm{vol}(\mathscr{D}_3)}{\mathrm{vol}(\mathscr{P})}=\frac{91}{216}\approx  0.421296.
$$
\end{remark}

\begin{figure}[h]
\centering
\includegraphics[width=0.5\textwidth]{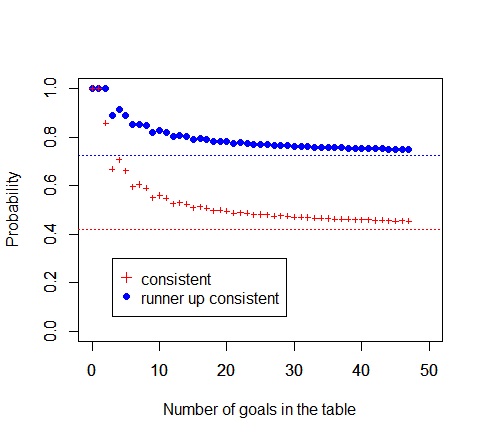}
\caption{Probabilities in the case $n=3$}\label{fig:pro}
\end{figure}

\subsection{Results obtained for $n\geq 4$}
We briefly present the results obtained for $n\geq 4$. The Hilbert series are computable up to the runner-up consistent $6\times 6$ and the consistent $5\times 5$ cases, however they are too complex for an explicit presentation here (especially their nominator). In order to show the increase in complexity we note that the denominator of the Hilbert series in the runner-up consistent $4\times 4$ case is
$$
(1-t)^2 (1-t^2) (1-t^4)^2(1-t^{12})^{7},
$$
and the denominator of the Hilbert series in the consistent $4\times 4$ case is
$$
(1-t)^2 (1-t^2) (1-t^4)^2(1-t^{12})^{3}(1-t^{120})(1-t^{360})(1-t^{2520})(1-t^{27720}).
$$

The multiplicities are computable up to the runner-up consistent $8\times 8$ and the consistent $5\times 5$ cases. Their exact writing as rational numbers involves very large integers and they are not so easy to interpret directly. For practical reasons we have included approximative results for the corresponding conditional probabilities in Table \ref{prob_data} (in which we use the notations from Remark \ref{limit}). We observe that the probabilities are decreasing fast as the size of the tournament increases.

\begin{table}[hbt]
 \centering
 \begin{tabular}{ccccc}

          & $\lim_{G \to \infty} R_r$    & $\lim_{G \to \infty} R_c$   \\
\midrule[1.2pt]
\ttt{3x3} &  0.722222 & 0.421296 \\ \hline
\ttt{4x4} &  0.512196 & 0.042183 \\ \hline
\ttt{5x5} &  0.351755 & 0.000648 \\ \hline
\ttt{6x6} &  0.235064 &    -     \\ \hline
\ttt{7x7} &  0.153372 &    -     \\ \hline
\ttt{8x8} &  0.097947 &    -     \\
\midrule[1.2pt]
\end{tabular}
\vspace*{2ex}\caption{Probabilities when $G \to \infty$}\label{prob_data}
\end{table}

\section{Computational experiments} \label{sec:comp}

The results presented in the previous sections were first conjectured by extensive computational experiments.
In this section we document these experiments, in the hope that the data may be useful for readers interested in running similar experiments.

For the experiments we have used the software Normaliz \cite{Nmz} (version 3.9.3), together with the the graphical interface jNormaliz \cite{AI}. For the algorithms implemented in Normaliz, we recommend the reader to see Bruns and Koch \cite{BK}, Bruns and Ichim \cite{BI}, \cite{BI2}, \cite{BI3}, Bruns, Hemmecke, Ichim, K\"oppe and S\"oger \cite{BHIKS}, Bruns and S\"oger \cite{BS}, Bruns, Ichim and S\"oger \cite{BIS}. All computations times in the tables below are ``wall clock times'' taken on a Dell R640 system with $1$ TB of RAM and two Intel\texttrademark Xeon\texttrademark Gold 6152 (a total of $44$ cores) using $20$ parallel threads (of the maximum of $88$).

\subsection{Hilbert bases computations using Normaliz}

We have run experiments using both the primal (see \cite{BI} and \cite{BHIKS}) and the dual algorithm of Normaliz (see \cite{BI}).
Table \ref{times1} contains the computation times for the
Hilbert bases of the corresponding monoids that we have obtained in these experiments.

\begin{table}[hbt]
\begin{tabular}{rrrrrr}
\strut                 & \multicolumn{2}{c}{runner-up consistent} & & \multicolumn{2}{c}{consistent}   \\
\cline{2-3}
\cline{5-6}
\strut                 &  \multicolumn{1}{c}{\ttt{primal}}&  \multicolumn{1}{c}{\ttt{dual}} & & \multicolumn{1}{c}{\ttt{primal}} &  \multicolumn{1}{c}{\ttt{dual}} \\
\midrule[1.2pt]
\strut \ttt{3x3}  &  0.013 s  &     0.028 s  & &     0.011 s &    0.044 s \\
\hline
\strut \ttt{4x4}  &  0.023 s  &     0.055 s  & &     0.030 s &    0.064 s \\
\hline
\strut \ttt{5x5}  &  0.213 s  &     0.079 s  & &   3:31:51 h &   26.145 s \\
\hline
\strut \ttt{6x6}  &  2.032 s  &     0.501 s  & &           - &  - \\
\hline
\strut \ttt{7x7}  &   2:06 m  &    12.688 s  & &           - &  - \\
\hline
\strut \ttt{8x8}  &  -        &      7:10 m  & &           - &  - \\
\midrule[1.2pt]
\end{tabular}
\vspace*{1ex}
\caption{Computation times for Hilbert bases}\label{times1}
\end{table}

We discuss the observations made during these experiments in the following remark.

\begin{remark} (a) As can be seen from the table, the dual algorithm is better suited for computing the Hilbert bases of these particular families of monoids.
This confirms the empirical conclusion that the dual algorithm is faster when the numbers of support hyperplanes is small relative to the dimension (see \cite{BIS} for more relevant examples).

(b) We also note that the Hilbert basis of the \ttt{9x9} runner-up consistent score sheets is very likely computable with Normaliz and the dual algorithm, however we were not interested
in this experiment since we were able to formulate Theorem \ref{theo:HB} without it.

(c) When using the primal algorithm for the runner-up consistent score sheets, a partial triangulation of the cone $C_n=\RR_+\mathscr{R}_n$
is builded by Normaliz (for details see \cite[Section 3.2]{BHIKS}). This partial triangulation is a subcomplex of the full lexicographic triangulation
obtained by inserting successively the extreme rays (for details see \cite[Section 4]{BI}).
For all the runner-up consistent score sheets for which computations with the primal algorithm were performed we observed that the partial triangulation is empty! Note that this is the
case if and only if the full lexicographic triangulation is a unimodular triangulation.
\end{remark}

\subsection{Multiplicities and Hilbert Series}
For computing multiplicities we have run experiments using the four algorithms that are now available in Normaliz:
\begin{enumerate}
\item The \emph{primal} Normaliz algorithm (see \cite{BI} and \cite{BIS}), which is denoted by \ttt{primal}. This algorithm is also available for computing Hilbert series;
\item \emph{Symmetrization} -- this is an improvement of the primal algorithm useful under certain conditions for computing both multiplicities and Hilbert series. It is presented in \cite{BS} and denoted by \ttt{symm};
\item \emph{Descent in the face lattice} -- denoted by \ttt{desc}. For a detailed discussion of this algorithm we refer the
reader to \cite{BI2};
\item \emph{Signed decomposition} -- denoted by \ttt{sgndec}. This a recent development of an algorithm first introduced by Lawrence \cite{Law} in the language of linear programming. More details are presented in \cite{BI3}.
\end{enumerate}

Table \ref{times2} contains the computation times for the
multiplicities of the corresponding monoids.

\begin{table}[hbt]
\begin{tabular}{rrrrrrrrr}
\strut            & \multicolumn{4}{c}{runner-up consistent} & & \multicolumn{3}{c}{consistent}   \\
\cline{2-5}
\cline{7-9}
\strut            &  \multicolumn{1}{c}{\ttt{primal}}&  \multicolumn{1}{c}{\ttt{symm}} & \multicolumn{1}{c}{\ttt{desc}} &  \multicolumn{1}{c}{\ttt{sgndec}} & & \multicolumn{1}{c}{\ttt{primal}} & \multicolumn{1}{c}{\ttt{desc}} & \multicolumn{1}{c}{\ttt{sgndec}} \\
\midrule[1.2pt]
\strut \ttt{3x3}  &  0.012 s &     0.012 s  &     0.012 s &    0.157 s  & &   0.011 s &  0.018 s &    0.156 s  \\
\hline
\strut \ttt{4x4}  &  0.033 s &     0.016 s  &     0.017 s &    0.159 s  & &   0.034 s &  0.025 s &    0.171 s \\
\hline
\strut \ttt{5x5}  &   1:19 m &     0.066 s  &     0.042 s &    0.181 s  & & 3:31:54 h & 41.205 s &     6:21 m \\
\hline
\strut \ttt{6x6}  &        - &    15.606 s  &     0.438 s &    1.094 s  & &         - &        - &          - \\
\hline
\strut \ttt{7x7}  &        - &           -  &    25.878 s &   41.733 s  & &         - &        - &          - \\
\hline
\strut \ttt{8x8}  &        - &           -  &   1:05:39 h &    41:00 m  & &         - &        - &          - \\
\midrule[1.2pt]
\end{tabular}
\vspace*{1ex}
\caption{Computation times for multiplicities}\label{times2}
\end{table}

Table \ref{times3} contains the computation times for the
Hilbert series.

\begin{table}[hbt]
\begin{tabular}{rrrrr}
\strut                 & \multicolumn{2}{c}{runner-up consistent} & & \multicolumn{1}{c}{consistent}   \\
\cline{2-3}
\cline{5-5}
\strut                 &  \multicolumn{1}{c}{\ttt{primal}}&  \multicolumn{1}{c}{\ttt{symm}} & & \multicolumn{1}{c}{\ttt{primal}}  \\
\midrule[1.2pt]
\strut \ttt{3x3}  &  0.013 s  &     0.015 s  & &     0.012 s  \\
\hline
\strut \ttt{4x4}  &  0.031 s  &     0.039 s  & &     1.221 s  \\
\hline
\strut \ttt{5x5}  &   4:18 m  &     0.907 s  & &  22:41:47 h  \\
\hline
\strut \ttt{6x6}  &  -        &      8:03 m  & &           -  \\
\midrule[1.2pt]
\end{tabular}
\vspace*{1ex}
\caption{Computation times for Hilbert series}\label{times3}
\end{table}

We discuss the observations made during these experiments in the following remark.

\begin{remark}\label{rem:tri} (a) As can be seen from the tables, the descent in the face lattice algorithm is clearly better suited for computing the multiplicities of the consistent score sheets,
while symmetrization should be used when computing the Hilbert series of the runner-up consistent score sheets.
Which algorithm should be used when computing the multiplicities of the runner-up consistent score sheets -- this is not as clear. Both the descent in the face lattice and the signed decomposition algorithms perform satisfactory and deliver quite similar performance.

(b) For computing the multiplicities (or Hilbert series) of the consistent score sheets symmetrization is not useful. Therefore results are not included above.

(c) When computing the multiplicity using the primal algorithm Normaliz is producing a full triangulation of the cone. In all the cases in which we were able to compute the multiplicity of the the runner-up consistent score sheets by this algorithm, the \emph{explicit} triangulation obtained was unimodular. As observed above, also the \emph{implicit} triangulations obtained in computations made for Hilbert bases were unimodular. We also note that, in general, the triangulations made by Normaliz are \emph{not} the
pulling triangulations implied by Corollary \ref{prop:compressed}. This is a remarkable fact, and one may ask if it is true for all triangulations of the runner-up consistent score sheets, or in other words: is the polytope $\mathscr{P}_n$ defined in the proof of Theorem \ref{theo:HB} totally unimodular? We also note that this is not true in the case of consistent score sheets, since in the case of the \ttt{4x4} consistent score sheets we have produced  triangulations which are \emph{not} unimodular.
\end{remark}


\begin{thebibliography}{99}

\bibitem{AI} V. Almendra, B. Ichim, jNormaliz. A graphical interface for Normaliz. Available at \url{www.math.uos.de/normaliz}.

\bibitem{BG} W.~Bruns, J.~Gubeladze, Polytopes, rings, and K-theory. Springer, 2009.

\bibitem{BHIKS} W.~Bruns, R.~Hemmecke, B. Ichim, M.~K\"oppe, C. S\"oger, Challenging computations of Hilbert bases of cones associated with algebraic statistics. Exp. Math.~20~(2011)~25--33.

\bibitem{BH} W.~Bruns and J.~Herzog, Cohen-Macaulay Rings. Cambridge, 1996.

\bibitem{bipams} W. Bruns, B. Ichim, On the coefficients of Hilbert quasipolynomials. Proc. Amer. Math. Soc.~135~(2007)~1305--1308.

\bibitem{BI} W. Bruns, B. Ichim, Normaliz: Algorithms for affine monoids and rational cones. J. Algebra~324~(2010)~1098--1113.

\bibitem{BI2} W. Bruns, B. Ichim, Polytope volume by descent in the face lattice and applications in social choice.
Mathematical Programming Computation~13~(2021)~415--442.

\bibitem{BI3} W. Bruns, B. Ichim, Computations of volumes in five candidates elections.
Preprint available at \url{https://arxiv.org/abs/2109.00473}.

\bibitem{BIS} W.Bruns, B. Ichim, C. S\"oger, The power of pyramid decompositions in Normaliz. J. Symb. Comp.~74~(2016)~513--536.

\bibitem{Nmz} W. Bruns, B. Ichim, C. S\"oger, U. von der Ohe, Normaliz. Algorithms for rational cones and affine monoids.
Available at \url{https://normaliz.uos.de}.

\bibitem{BK} W. Bruns, R. Koch, Computing the integral closure of an affine semigroup. Univ. Iagel. Acta Math.~39~(2001)~59--70.

\bibitem{BS} W. Bruns, C. S\"{o}ger, Generalized Ehrhart series and Integration in Normaliz. J. Symb. Comp.~68~(2015)~75--86.


\bibitem{SCh} S. Chowdhury, Euro 1992: Denmark's fairytale. Available at \url{https://www.bbc.com/sport/football/17757335}.

\bibitem{E} E.Ehrhart, Polyn\^omes arithm\'etiques et M\'ethode
 des Poly\`edres en Combinatoire.
 International Series of Numerical Mathematics 35,
 Birkh\"auser Verlag, 1977.

\bibitem{EZ} Sh.~B.~Ekhad, D. Zeilberger, There are $\frac{1}{30}(r+1)(r+2)(2r+3)(r^2+3r+5)$ Ways For the Four Teams of a World Cup Group to Each Have $r$ Goals For and $r$ Goals Against. Preprint, arXiv:1407.1919.

\bibitem{HNP} C. Haase, B. Nill, A. Paffenholz, Lecture Notes on Lattice Polytopes. TU Berlin, 2020.

\bibitem{IM} B. Ichim, J.J. Moyano-Fern\'andez, On the score sheets of a round-robin football tournament. Adv. Appl. Math.~91~(2017)~24--43.

\bibitem{KIN} A.Y. Kondratev, E. Ianovsky, A. S. Nesterov, How should we score athletes and candidates: geometric scoring rules. Preprint, arXiv:1907.05082.

\bibitem{Law} J.~Lawrence, Polytope volume computation. Mathematics of Computation~57~(1991)~259--271.

\bibitem{OH} H. Ohsugi, T. Hibi, Convex polytopes all of whose reverse lexicographic initial ideals are squarefree. Proc. Am. Math. Soc.~129~(2001)~2541--2546.

\bibitem{UNO} Security Council of the United Nations Organization,  Resolution 757, 47 U.N. SCOR at 13, U.N. Doc S/RES/757 (1992). Available at \url{http://hrlibrary.umn.edu/peace/docs/scres757.html}

\bibitem{Z2} D.~Zeilberger, How Many Exciting Soccer Games are there with score $(N+1,N)$? Available at \url{http://sites.math.rutgers.edu/~zeilberg/mamarim/mamarimhtml/soccer.html}, author's personal~home~page (2018).

\end{thebibliography}
\end{document}